\newcommand{\ZZ}{{\mathbb Z}}
\newcommand{\RR}{{\mathbb R}}
\renewcommand{\P}{\mathcal P}
\newcommand{\ext}{\mbox{Ext\,}}
\renewcommand{\int}{\mbox{Int\,}}
\newcommand{\Conv}{\mbox{Hull\,}}
\renewcommand{\S}{\mbox{s\,}}
\renewcommand{\epsilon}{\varepsilon}
\newtheorem{thm}{Theorem}[section]
\newtheorem{prop}[thm]{Proposition}
\newtheorem{lemma}[thm]{Lemma}
\theoremstyle{definition}
\newtheorem{rem}[thm]{Remark}}
\theoremstyle{definition}
\newtheorem{exa}[thm]{Example}}
\begin{document}
\onehalfspacing 

\title[Maximally inflected ovals]{
  On maximally inflected hyperbolic curves}

\date{\today}

\author[A. Arroyo]{Aubin Arroyo}
\author[E.  Brugallé]{Erwan Brugallé}

\address{Erwan Brugallé, 
École polytechnique,
Centre Mathématiques Laurent Schwatrz, 91 128 Palaiseau Cedex, France }

\email{erwan.brugalle@math.cnrs.fr}

\author[L. López de Medrano]{Lucia López de Medrano}

\address{Unidad Cuernavaca del Instituto de Matemáticas,Universidad Nacional
Autonoma de México. Cuernavaca, México}

\email{aubinarroyo@im.unam.mx,  lucia.ldm@im.unam.mx }


\keywords{Maximally inflected hyperbolic real curves
and their convex hull, Patchworking of real algebraic curves, tropical
curves}

\begin{abstract}
In this note we study the distribution of real inflection points among
the ovals of a real non-singular hyperbolic curve of even degree.  
Using Hilbert's method
we show that for any 
integers $d$ and $r$ such that $4\leq r \leq 2d^2-2d$, there is a
non-singular hyperbolic curve of degree $2d$ in $\RR^2$
with exactly $r$ line segments in the boundary of its convex hull. 
We also give a complete
classification of possible distributions of inflection points among the ovals of a maximally inflected non-singular hyperbolic curve of degree $6$. 
\end{abstract}

\maketitle

\newcommand{\RP}{\mathbb{RP}^2}
\newcommand{\CP}{\mathbb{CP}^2}

\section{Introduction}
The fact that a non-singular real algebraic
curve in $\RP$ of degree $\delta$ has at most 
$\delta(\delta-2)$ real inflection points
was proved by Klein in 1876, see \cite{Kle76}; see also \cite{Ron98}, \cite{Sch04} and \cite{Vir88}. 
A non-singular real plane algebraic curve is  called 
 \emph{maximally inflected} if it has $\delta(\delta-2)$ distinct real inflection points. 
The existence of maximally inflected curves of any degree was also
proved by Klein.  
Possible distributions of inflection points 
of a maximally inflected 
real algebraic curve in $\RP$ are
subject to non-trivial obstructions that mainly remain mysterious (see
for example \cite{KS03} and \cite{BLdM12}).
In this note we focus on the case
of maximally inflected hyperbolic curves of even degree.

Let $X=\RR^2$ or $\RP$.
A real algebraic curve $C$ in $X$ is said to be
\emph{hyperbolic}
 if there exists a point
$p\in X\smallsetminus C$ such that any real line through $p$
intersects $C$ only in real points. 
The topology of a non-singular hyperbolic curve is easy to describe.
An embedded circle $O$ in $X$ is called an \emph{oval} if its
complement in $X$ has two connected components. In this
case, one of them, called the \emph{interior} of $O$ and denoted
by $\int(O)$, is
homeomorphic to a disk, while the other is  called the \emph{exterior} of
$O$ and denoted
by $\ext(O)$.
An oval $O$ is said to be contained in another oval $\tilde O$ if $O\subset \int(\tilde O)$.
A non-singular hyperbolic  curve $C$ of  degree $2d$
is a set of $d$ nested ovals, i.e. the
inclusion relation among its ovals is a total ordering. 
The oval of $C$ containing all the others  is called the \emph{outer
  oval}
 of $C$. 
We define the \emph{interior} and the \emph{exterior} of $C$, denoted
by $\int(C)$ and $\ext(C)$, as the interior and exterior of its outer
oval respectively.

\subsection{Line segments in the boundary of the convex hull of a
  hyperbolic curve in $\RR^2$}\label{sec:intro line}
Denote by $\Conv(S)$ the convex hull of a compact set $S\subset \RR^2$. Of course, if $S$ is not convex then the boundary of $\Conv(S)$ contain some line segments. Denote by $s(S)$ the number of line segments in $\partial \Conv(S)$.

In the case of a  non-singular hyperbolic  curve $C$ of  degree $2d$
in $\RR^2$, the number $\S(C)$ is related to the number of inflection points contained in the outer oval $O$ of $C$. 
More precisely, for each line segment $l$ in $\partial \Conv(C)$, there
are  at least two inflection points of $C$
in the connected component of the closure $O \setminus \partial \Conv(C)  $ with the same endpoints
than $l$. Klein Inequality then implies that:
\begin{equation*}
\S(C) \leq 2d(d-1).
\end{equation*} 

Next theorem shows, in particular, that this upper bound is sharp,
answering to a question posed by De Loera, Sturmfels, and Vinzant in
\cite{DLSV}. 
\begin{thm}\label{T2}
Let $d$ and $r$ be two positive integers such that $4\leq r \leq 2d(d-1)$. Then there exists a 
non-singular  maximally inflected hyperbolic curve $C$  of degree $2d$ in $\RR^2$ with  $\S(C) = r$. 
\end{thm}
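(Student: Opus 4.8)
The strategy is to realise every admissible $r$ by an explicit curve built by Hilbert's method (a form of Viro's patchworking), i.e.\ as a chain of small perturbations of reducible curves, the degree going up by $2$ at each step. The starting observation is that the inner ovals of a hyperbolic curve lie in the topological interior of its outer oval $O$, so they are invisible to $\partial\Conv(C)$; hence $\S(C)$ is the number of connected components of $O\smallsetminus\partial\Conv(C)$, that is, the number of maximal concave arcs of $O$. Accordingly, I would aim for a curve whose outer oval is convex except for exactly $r$ such arcs, each carrying precisely two real inflection points, and whose remaining $2d(2d-2)-2r=4d(d-1)-2r$ real inflection points lie on the inner ovals. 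This prescription is numerically consistent precisely when $2r\le 4d(d-1)$, i.e.\ $r\le 2d(d-1)$; the bound $r\ge 4$ is the bottom of the claimed range (the base case being $d=2$, $r=4$, in which all eight inflection points of a maximally inflected hyperbolic quartic lie on a four-times-dented outer oval), and I take it as given.

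The construction is by induction on $d$. Fix once and for all a point $p$, and require it to lie in the interior of every oval and of every conic used at every stage. Then every real line through $p$ meets the reducible curve of degree $2k$ occurring at stage $k$ in $2k$ real points, and this persists under a small smoothing, so hyperbolicity holds automatically throughout and need not be revisited. A single step perturbs $C_{2k}\cup Q$ for a suitable conic $Q$. The sign of the smoothing at each of the (at most $4k$) intersection points of $Q$ with $C_{2k}$ controls \emph{both} how the two branches reconnect there---hence the topology of the result---\emph{and} whether a pair of real inflection points is born near that point, the latter happening, for exactly one of the two signs, when the two branches lie on appropriate sides of their common tangent. The key is to position $Q$ so that all $4k$ of its intersection points with $C_{2k}$ are real and so that, at each of them, the sign yielding the desired topology (exactly one new nested oval) coincides with the sign creating a pair of inflection points. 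Then $8k$ new real inflection points appear; this is exactly the jump in the Klein bound $\delta(\delta-2)$ between $\delta=2k$ and $\delta=2k+2$, so Klein's inequality becomes an equality and certifies that $C_{2k+2}$ is again maximally inflected.

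It remains to arrange the outer oval. If $r\le 2(d-1)(d-2)$, I keep it untouched: by the induction hypothesis (strengthened so as to also control the distribution of inflection points among the inner ovals) there is a maximally inflected hyperbolic curve of degree $2(d-1)$ with $\S=r$ whose inner ovals are wiggly enough; I then add a new oval using a conic $Q$ all of whose intersection points with that curve lie on its inner ovals, so that $\S$ stays equal to $r$ while the $8(d-1)$ new inflection points, placed on inner ovals, restore maximality. For the remaining $4(d-1)$ values of $r$, namely $2(d-1)(d-2)<r\le 2d(d-1)$ (and for $d=2$ the single value $r=4$), no curve of degree $2(d-1)$ has that many concave arcs, so the new concave arcs must be produced on the outer oval during the passage to degree $2d$: here I route the last conic so that a prescribed number of its intersection points with the current curve are resolved into new concave arcs of the outer oval, each carrying two of the $8(d-1)$ new inflection points, the remaining intersection points lying on inner ovals; by varying how many intersection points are sent to the outer oval one reaches every value of $\S$ up to $2d(d-1)$. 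A transversality argument guarantees that each created concave arc genuinely reaches outside $\Conv(C)$, so that $\S$ increases by exactly one per arc and no spurious concavity of $O$ appears.

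The hard part will be the tightness of the inflection count. Because maximality pins the total number of real inflection points at the Klein bound, there is no slack anywhere: along the whole chain of perturbations not one real inflection point may degenerate or escape to infinity, and every smoothed node must contribute exactly the expected pair---no fewer, no more. Securing this reduces to showing that at every intersection point of every auxiliary conic with the current curve one can make the topologically correct smoothing agree with the inflection-creating one, which is exactly why the conics must be chosen with care as to the side from which, and the curvature with which, they meet each branch; and simultaneously one must track precisely which of the newly created inflection points land on the outer oval, so that it ends with exactly $r$ honest concave arcs. Carrying out this coordination uniformly over all $d-1$ steps and all configurations of intersection points is the technical core; the bounds $4\le r\le 2d(d-1)$ are just the range in which the bookkeeping closes up.
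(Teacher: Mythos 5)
There are two concrete problems with your plan. First, your claim that a pair of real inflection points is born near a smoothed node ``for exactly one of the two signs'' is false: by Proposition \ref{prop:klein} (Klein--Ronga), \emph{every} real perturbation of a generic node with two real branches produces exactly two real inflection points in a neighborhood of the node; the sign of the smoothing only affects which of the two resulting local arcs carries them. This is not a harmless slip, because this lemma is precisely what makes the ``tightness of the inflection count'' --- which you correctly identify as the hard part and then leave open --- automatic: once the nodal curve at each stage satisfies $\#I(C)+2\#N(C)=2d(2d-2)$, \emph{any} small smoothing is maximally inflected, and no coordination between the topological and the inflection-creating signs is needed (indeed no such dichotomy exists).

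Second, and more seriously, your design forces each concave arc of the outer oval to carry exactly two inflection points and therefore pushes the remaining $4d(d-1)-2r$ inflection points onto the inner ovals. Nothing requires this: a component of $O\smallsetminus\partial\Conv(C)$ may carry arbitrarily many inflection points while staying inside the hull, so one may instead keep \emph{all} $4d(d-1)$ inflection points on the outer oval and let only $r$ of its arcs dip inside $\Conv(C)$. Your version is not merely unnecessary, it is likely unachievable in general: the distribution of inflection points among the ovals of a hyperbolic curve is subject to genuine obstructions (already in degree $6$ the oval $O_2$ can carry at most $18$ of the $24$ inflection points, and realizing extreme distributions there requires tropical patchworking rather than Hilbert's method), and your inductive step --- a conic meeting the degree $2(d-1)$ curve in $4(d-1)$ real points all on inner ovals, smoothing to a correctly nested hyperbolic curve with all $8(d-1)$ new inflection points landing on inner ovals --- is asserted, not constructed. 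The paper's route sidesteps all of this: it fixes one ellipse $C_0$, inductively perturbs $C_{d-1}\cup C_0$ by a union of $2d$ lines so that every inflection point remains on the outer oval and $\int(C_d)\cap\int(C_0)$ remains convex, and controls $\S(C_d)$ purely through the number $r_d$ of new intersection points with $C_0$ lying on $\partial\Conv(C_{d-1}\cup C_0)$, each of which contributes exactly one new line segment and which may be chosen freely in $\{0,\dots,4d\}$.
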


The proof of Theorem \ref{T2} is 
given in Section \ref{sec:hilbert}, and is based on Hilbert's method
of construction of real algebraic curves. See \cite{V4} for an exposition
of this method in modern terms.
For the reader's convenience, we break the proof of Theorem \ref{T2} into two parts:
in Section \ref{sec:hilbert2} we explain how to adapt Hilbert's method to construct
non-singular maximally inflected hyperbolic curves of even degree with
all real inflection points contained in the outer oval;
In Section \ref{sec:hilbert3} we adapt this construction to prove Theorem \ref{T2}.

\subsection{Hyperbolic curves of degree 6}
Recall that a hyperbolic curve of degree 6 has  three nested ovals. 
Bezout's Theorem implies that there are no inflection points in the smallest oval of $C$.  
Next theorem gives then a complete classification of possible
distributions of inflection points among the ovals of a non-singular
maximally inflected hyperbolic curve of degree 6 in $\RP$.
  
\begin{thm}\label{theo:hyp deg 6}
Let $C$ be a non-singular maximally inflected hyperbolic curve of
degree 6 in $\RP$. Then, the outer oval of $C$ contains at least 6 real inflection points. 
Moreover, for any 
$0 \le k \le 9$, there exists such a hyperbolic curve with exactly $6+ 2k$ real inflection points on the outer oval.
\end{thm}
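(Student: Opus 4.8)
The plan is to split the argument according to the two assertions, proving the inequality by B\'ezout-type considerations that exploit the star-shaped structure of a hyperbolic curve, and the realization part by Hilbert's method as in Theorem~\ref{T2}. Write $O_1\supset O_2\supset O_3$ for the three nested ovals of $C$, with $O_3$ the innermost one, let $p\in\int(O_3)$ be a hyperbolicity point, and let $f_i$ be the number of real inflection points on $O_i$. Since $C$ is maximally inflected its $24$ real inflection points are all ordinary, so the curvature of $O_i$ (a real analytic function on a circle) changes sign at each of them and each $f_i$ is even. As already noted, B\'ezout gives $f_3=0$, and the same computation shows that the curvature of $O_3$ never vanishes, so $O_3$ bounds a strictly convex disk. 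Since $C$ is maximally inflected we have $f_1+f_2=24$, so the first assertion is equivalent to $f_2\le 18$; I would argue by contradiction, supposing $f_1\le 4$ and $f_2\ge 20$.

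First I would record the local constraints imposed by B\'ezout. If $q\in O_2$ is a flex, its tangent line $T_q$ meets $C$ with total multiplicity $6$ and with multiplicity $\ge 3$ at $q$; since $q$ is a simple flex, $T_q$ crosses $O_2$ at $q$, the branch of $T_q$ entering $\int(O_2)$ has to leave it again through $O_2$, and it cannot meet $O_3$, since otherwise $T_q$ would meet $O_3$ twice, $O_2$ with multiplicity at least $4$, and $O_1$ twice, for a total exceeding $6$. Hence $T_q$ is disjoint from the closed disk bounded by $O_3$, meets $O_2$ transversally in exactly one further point, and meets $O_1$ transversally in exactly two points. Next I would exploit the hyperbolic structure itself: every line through $p$ meets $C$ in $6$ real points and, leaving $p$ in either direction, exits $\int(O_3)$, $\int(O_2)$ and $\int(O_1)$ successively; the convexity of $O_3$ together with a B\'ezout count forces exactly one crossing with each $O_i$ per direction, so no line through $p$ is tangent to $C$ and each $O_i$ is a polar graph $r=\rho_i(\varphi)$ around $p$ with $\rho_1>\rho_2>\rho_3>0$. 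Setting $u_i=1/\rho_i$, the curvature of $O_i$ at angle $\varphi$ has the sign of $u_i(\varphi)+u_i''(\varphi)$, so $f_i$ equals the number of zeros of $u_i+u_i''$ on the circle, with $u_1<u_2<u_3$ and $u_3+u_3''>0$, while $\rho_1,\rho_2,\rho_3$ are the three positive branches, over $\varphi$, of a real algebraic curve of degree $6$.

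The hard part is to upgrade this to $f_2\le 18$, and I expect it to be the main obstacle. I would translate the situation into the dual projective plane: the Gauss images $\gamma(O_1),\gamma(O_2),\gamma(O_3)$ are nested closed curves, with inclusion reversed, avoiding the line $p^\vee$ and carrying respectively $f_1$, $f_2$ and $0$ cusps, $\gamma(O_3)$ being a smooth convex oval and the cusps of $\gamma(O_2)$ all lying outside it; equivalently, $C^\vee$ is a curve of degree $30$ with $72$ cusps, of which $24$ are real. One then has to show that the innermost component $\gamma(O_1)$ must carry at least $6$ of those cusps. I expect the cleanest proof to combine a Pl\"ucker/Klein count for the singular dual curve $C^\vee$ with the monotonicity $u_1<u_2<u_3$ and the positivity $u_3+u_3''>0$, the number $6$ arising as the count of sign changes of $u_2+u_2''$ that are necessarily inherited by the outer oval because $O_1$, being a degree-$6$ branch, has to enclose the reflex arcs of $O_2$. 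The delicate point is precisely this: a naive B\'ezout argument with an auxiliary line, conic or cubic is not sharp, nor is the plain Klein inequality, so one really has to use the star-shaped hyperbolic structure.

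For the second assertion I would use Hilbert's method exactly as in the proof of Theorem~\ref{T2}. The construction of Section~\ref{sec:hilbert2} already produces a non-singular maximally inflected hyperbolic sextic with all $24$ flexes on the outer oval, which is the case $k=9$. Starting from the same singular curve and patchworking data, I would modify the smoothing in the charts adjacent to the middle oval so that exactly $9-k$ of the reflex dents created by the perturbation are produced on $O_2$ rather than on $O_1$, keeping the resulting sextic non-singular, hyperbolic and maximally inflected; tracking the flexes through the combinatorics of the patchwork (the dual subdivision of the Newton polygon) then yields $f_2=18-2k$ and $f_1=6+2k$. Together with the inequality of the first three paragraphs, this gives the announced classification.
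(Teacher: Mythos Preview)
Your proposal has a genuine gap in the obstruction part. You yourself acknowledge that ``the hard part is to upgrade this to $f_2\le 18$'' and then offer only an outline of what you \emph{expect} might work: a combination of Pl\"ucker/Klein counts on the dual curve with the monotonicity $u_1<u_2<u_3$ and the star-shaped structure. But no actual argument is given, and you correctly observe that naive B\'ezout with auxiliary lines, conics, or cubics is not sharp. The paper's proof takes an entirely different route that your proposal does not anticipate: it first shows (Lemma~\ref{lem:bitangent}) that $2n$ real flexes on $O_2$ force exactly $n$ real bitangents tangent to $O_2$ at two real points, by deforming to a union of three nested ellipses and using Klein's description of how real flexes appear and disappear together with real bitangents. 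Then (Lemma~\ref{lem:prohib}) it invokes Orevkov's braid-theoretical method for real pseudoholomorphic curves: the $\mathcal L_p$-scheme of the sextic together with $n$ such bitangents yields an explicit braid $b_n$, and the Murasugi--Tristram inequality applied to the Alexander polynomial of $b_{10}$ rules out $n\ge 10$. Nothing in your dual-curve/polar-coordinate setup approaches this, and there is no known elementary substitute.

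On the construction side, your plan to obtain all ten values of $k$ by tweaking Hilbert's method is also incomplete. The paper does use Hilbert's method, but only for $k=9$ and $k\in\{1,3,5,7\}$; for the even values $k\in\{0,2,4,6,8\}$ it resorts to patchworking of tropical curves as in \cite{BLdM12}, with $k=8$ requiring a nodal tropical curve and Shustin's singular patchworking. Your sentence ``modify the smoothing in the charts adjacent to the middle oval so that exactly $9-k$ of the reflex dents \ldots are produced on $O_2$'' does not explain how to achieve this while keeping the curve hyperbolic and maximally inflected, and the parity obstruction that forces the paper to switch methods suggests this is not a mere bookkeeping issue.
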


The proof of this theorem will be given in Section \ref{sec:degree 6} and it combines two
main ingredients: 
First  we use the tropical methods developed in
\cite{BLdM12} to construct real algebraic curves with a prescribed
position of their real inflection points.
Then we rely on Orevkov's braid theoretical method to prove that the outer oval of $C$ cannot contain less than 6 inflection points. 
Note that our proof of this latter fact uses in a crucial way that we deal with curves
of degree 6. 
The existence of a non-singular maximally inflected hyperbolic curve of degree $2d>6$ with an outer oval not containing any inflection points
remains an open problem.

\medskip
\noindent \textbf{Acknowledgments:} We thank the Laboratorio
Internacional Solomon Lefschetz (LAISLA), associated to the CNRS
(France) and CONACYT (Mexico). The first author was partially
supported by CONACyT-México \#167594. The third author was partially
supported by UNAM-PAPIIT IN-117110. We also thank Bernd Sturmfels for
his 
stimulating questions, and, as usual, Jean-Jacques Risler for valuable
comments on a preliminary version of this paper. 

\section{Construction of inflected curves}\label{sec:hilbert}
Hilbert's method can be adapted to construct non-singular maximally inflected hyperbolic curves of even degree with all real inflection points contained in the outer oval.
The proof of Theorem \ref{T2}, given in Section \ref{sec:hilbert3}, will consider perturbations of certain curves with generic nodes; these curves are described in Section \ref{sec:hilbert2}. 
Recall that a \emph{node} of an algebraic curve $C$ in $\CP$ is a
point 
in the curve $C$ which is the transverse intersection of two
non-singular local
branches of $C$. 
Such a node is called \emph{generic} in the case that both branches of $C$ at the point have intersection multiplicity 2 with their tangent at the node.
If $C$ is a real curve with a node $p \in \RP$, then either $p$ is the intersection of two real branches or is  the intersection of two  complex conjugated branches of $C$. 
For our construction we shall focus only on nodes of the former type.

The fact that Hilbert's method produces maximally inflected curves is
based on the following observation (see Figure \ref{fig:smoothing node}).
\begin{prop}[see {\cite[Proposition 3.1]{Ron98}}]\label{prop:klein}
Let $C$ be a real algebraic curve in $\RP$ having a generic node
$p\in\RP$
 with two real branches. Then any
 real perturbation of $C$ has exactly two real inflection points
on a neighborhood of $p$. 
\end{prop}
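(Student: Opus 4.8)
The plan is to work in local analytic coordinates $(x,y)$ centered at $p$ in which the two real branches of $C$ are given by $y = f_1(x)$ and $y = f_2(x)$, where $f_1$ and $f_2$ are real analytic functions vanishing to order at least $1$ at $0$ with distinct first derivatives (the branches are transverse), and such that each branch is generic, i.e. $f_i''(0) = 0$ but $f_i'''(0) \neq 0$. After an affine change of coordinates I may assume $f_1'(0) = -f_2'(0)$ and $f_2(x) = -f_1(-x)$ is not needed, but it is convenient to arrange $f_1(0) = f_2(0) = 0$ and $f_1'(0) \ne f_2'(0)$. The local equation of $C$ is then $F(x,y) = (y - f_1(x))(y - f_2(x))$ up to a unit, and a real perturbation $C_\epsilon$ is given by $F(x,y) = \epsilon \, g(x,y)$ for a fixed real analytic $g$ with $g(0,0) \ne 0$, with $\epsilon$ a small real parameter.

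The key computation is to count, for small $\epsilon \neq 0$, the real inflection points of $C_\epsilon$ near the origin. Inflection points of $\{F = \epsilon g\}$ are the intersection points of this curve with its Hessian curve $\mathrm{Hess}(F - \epsilon g) = 0$ (or, working affinely, the points where the signed curvature of $C_\epsilon$ vanishes). First I would analyze the limit $\epsilon = 0$: on the branch $y = f_i(x)$ the affine curvature vanishes exactly where $f_i''(x) = 0$, and since $f_i''(0) = 0$, $f_i'''(0) \neq 0$, the function $f_i''$ has a simple zero at $x = 0$; thus each branch of $C$ carries exactly one inflection point at $p$ with multiplicity one, so the node $p$ "absorbs" exactly two inflection points counted with multiplicity. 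The real content of the argument is that this count is stable: for $|\epsilon|$ small and nonzero, the inflection points near $p$ of the nonsingular curve $C_\epsilon$ are all real and there are exactly two of them. I would obtain this by an explicit local parametrization: solve $F(x,y) = \epsilon g(x,y)$ for $y$ as two analytic functions $y = h_{\pm}(x, \sqrt{\epsilon})$ (or $y = h_\pm(x,\epsilon)$ on each branch when $\epsilon$ has the sign separating the branches — here one must be slightly careful about which of the two smoothings one takes, but both cases are handled identically), compute $h_\pm''$ and show that on each of the two local arcs of $C_\epsilon$ the second derivative has exactly one simple real zero, converging to $x = 0$ as $\epsilon \to 0$, by Rouché or by the implicit function theorem applied to the rescaled equation $h_\pm''(x,\epsilon) = 0$ after dividing out the appropriate power of $x$.

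The main obstacle I anticipate is bookkeeping the smoothing correctly: depending on the sign of $\epsilon$ relative to the local picture, $C_\epsilon$ near $p$ looks like one of the two ways of resolving a transverse crossing, and in one of those resolutions the two arcs are $\{x > 0\}$-ish hyperbola branches and in the other they are $\{x<0\}$-ish — and one must check that in both cases each arc still contains exactly one inflection point (this is geometrically clear from Figure \ref{fig:smoothing node}, but needs the derivative estimate). A clean way to package all of this is to note that the total intersection multiplicity of $C$ with its Hessian at $p$ is $6$ for a generic node (each branch contributing $3$: the tangent line meets the branch with multiplicity $3$ because of genericity, and $\mathrm{Hess}$ has the same contact), that $2$ of these come from the actual inflectional contact and $4$ from the node itself being a base point of the Hessian pencil; then a conservation-of-number argument over $\RR$, combined with the explicit real parametrization above to rule out the inflection points escaping to the complex domain, yields exactly two real inflection points of $C_\epsilon$ near $p$. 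I would present the explicit one-variable computation as the core of the proof since it simultaneously gives existence, the exact count, and reality.
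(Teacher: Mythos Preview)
Your proposal rests on a misreading of the definition of a \emph{generic node}. In the paper, a node is generic when each local branch has intersection multiplicity exactly $2$ with its tangent at $p$; in your coordinates $y=f_i(x)$ this means $f_i''(0)\neq 0$, not $f_i''(0)=0$. You have reversed this: you assume each branch already carries an inflection at $p$ (multiplicity $3$ with its tangent) and then argue that these two inflections persist under perturbation. That is not the situation here --- for a generic node neither branch is inflected at $p$, so there are no pre-existing inflections to track, and your perturbation argument (showing that simple zeros of $f_i''$ survive) proves the wrong statement.

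The actual mechanism is different: because each branch is locally convex at $p$ but the two branches have distinct tangent directions, each arc of the smoothed curve $C_\epsilon$ is forced to interpolate between portions of two convex arcs bending in incompatible directions, and this forces exactly one sign change of curvature on each arc. This is what Figure~\ref{fig:smoothing node} is illustrating. Your Hessian-multiplicity remark is also off: the intersection multiplicity of $C$ with its Hessian at an ordinary node is indeed $6$, but this does \emph{not} decompose as ``$3$ from each branch because the tangent meets it to order $3$''; most of that $6$ is an artefact of the singularity and does not survive as nearby inflections of $C_\epsilon$. Note finally that the paper does not supply its own proof of this proposition at all --- it is quoted from \cite[Proposition~3.1]{Ron98} --- so if you want to compare approaches you should look there; but in any case your setup needs to be corrected to $f_i''(0)\neq 0$ before any argument can proceed.
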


\begin{figure}[htbp] 
   \centering
\begin{tabular}{ccccc}
   \includegraphics[width=3cm]{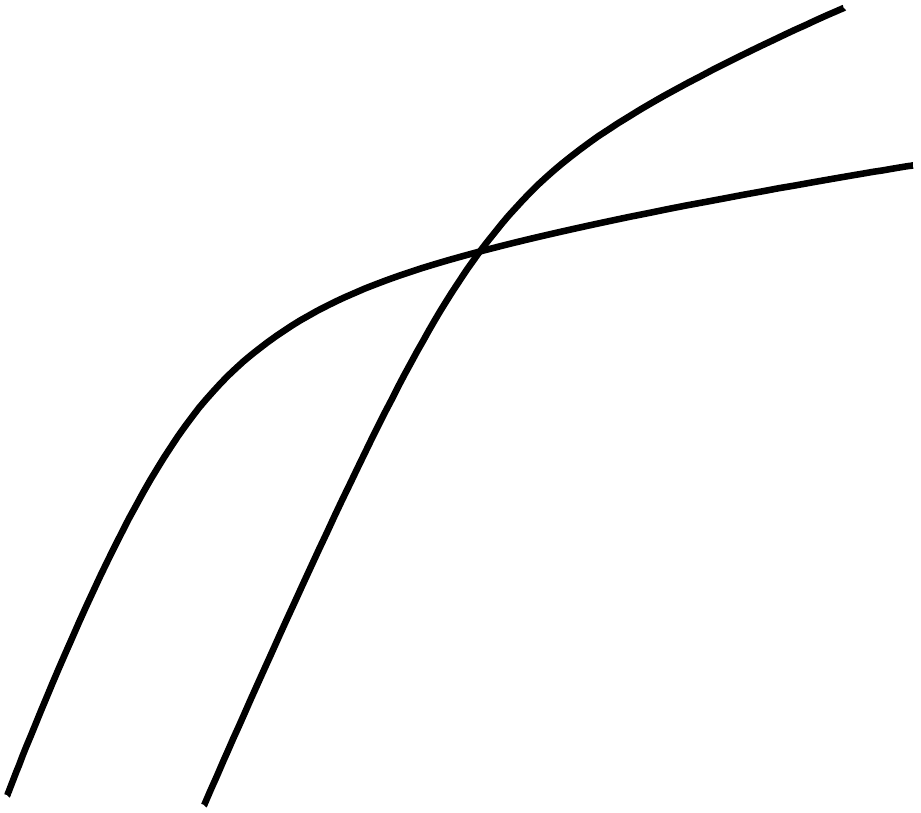} & \hspace{1cm} &
   \includegraphics[width=3cm]{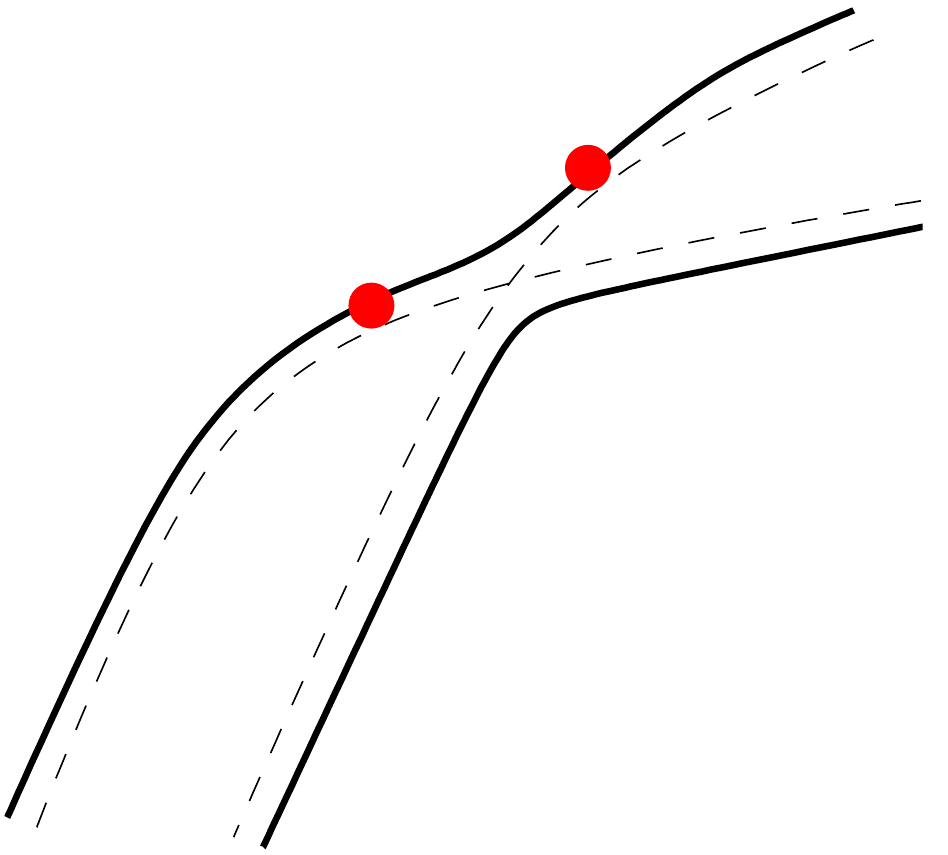} & \hspace{1cm} &
   \includegraphics[width=3cm]{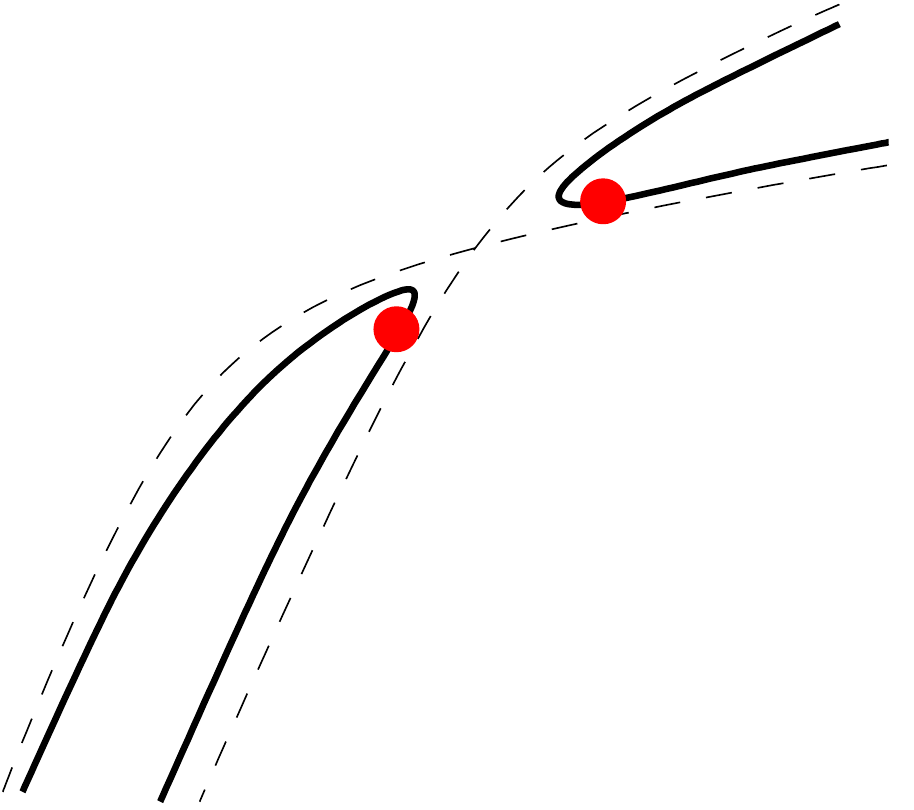}
\\ \\ a) && b) &&c)
\end{tabular}
   \caption{Smoothing of a generic node and the creation of two real inflection points.}
   \label{fig:smoothing node}
\end{figure}

A real  algebraic curve $C$
in $\RP$ is said to be \emph{generically inflected} if it is
non-singular or nodal, and
any of its
real inflection point is 
a non-singular point at which
$C$ has intersection multiplicity 3 with its tangent. Given a generically inflected real algebraic curve $C$, we
 denote by
$I(C)$  the set of its non-singular real  inflection points, 
and by $N(C)$ the set of 
its real generic nodes.
It follows from Klein Inequality and Proposition \ref{prop:klein} that if $C$ has degree $d$,
\begin{equation}\label{equation} \#I(C)+2\#N(C)\leq d(d-2).\tag{*}
\end{equation}

A generically inflected curve $C$ is called \emph{maximally inflected} if
equality holds in \eqref{equation}.

Next Lemma is an immediate consequence of 
Proposition \ref{prop:klein}. 

\begin{lemma}\label{Kharlamov}
Let $C$ be a generically inflected
 real algebraic curve in $\RP$.
 Then for any 
non-singular real 
small deformation $C'$ of $C$, one has
\begin{equation*}
\#I(C')= \#I(C)+2\,\# N(C).
\end{equation*} 
In particular, any 
 non-singular 
real 
small deformation of a maximally inflected real curve is  maximally
inflected.
\end{lemma}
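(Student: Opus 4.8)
The plan is to count the real inflection points of $C'$ by localising them near the finite set of singular points and real inflection points of $C$, and then applying Proposition~\ref{prop:klein} near the generic nodes together with an elementary local count near the simple flexes.

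\medskip

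First I would recall that, writing $F$ for a real homogeneous polynomial defining $C$ and $H_F$ for its Hessian determinant, the real inflection points of a generically inflected curve are exactly the non-singular real points of $C\cap\{H_F=0\}$, while the singular points of $C$ always lie on $\{H_F=0\}$. Since $C$ is generically inflected, at every non-singular real point of $C$ lying outside $I(C)$ one has $H_F\neq 0$; as $\RP$ is compact this non-vanishing is uniform, so there is a neighbourhood $U$ of $N(C)\cup I(C)$ together with (the real point underlying) each complex conjugate node of $C$, which we may take to be a disjoint union of small disks, such that every real inflection point of a sufficiently small non-singular real deformation $C'$ of $C$ lies in $U$. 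Indeed, a limit of real inflection points of $C_n\to C$ lying outside $U$ would be a non-singular real point of $C$ outside $I(C)$ at which $H_F=0$, a contradiction.

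\medskip

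Next I would carry out the local count in each disk of $U$. Around a generic node $p\in N(C)$ with two real branches, Proposition~\ref{prop:klein} gives exactly two real inflection points of $C'$. Around a simple flex $q\in I(C)$, i.e. a non-singular point where $C$ meets its tangent with multiplicity $3$, one works in an affine chart where $C$ is the graph $y=x^3+(\text{higher order terms})$; the inflection points are the zeros of $y''$ on the curve, which near $q$ form a single transverse zero, and this zero persists, still simple, for $C'$ close to $C$. Around the real point of a complex conjugate node, $C$ is locally an isolated point, so a small real smoothing is empty or a small convex oval there, contributing no real inflection point. Summing over all disks of $U$ yields
\begin{equation*}
\#I(C')=\#I(C)+2\,\#N(C).
\end{equation*}
For the final assertion, if $C$ has degree $d$ and is maximally inflected then equality holds in \eqref{equation}, so $C'$ is a non-singular real curve of degree $d$ with $\#I(C')=\#I(C)+2\#N(C)=d(d-2)$ real inflection points; it thus attains the Klein bound, hence is maximally inflected. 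The step requiring the most care is the localisation: one must justify the uniform non-vanishing of the Hessian away from $N(C)\cup I(C)$ and the resulting confinement of all inflection points of $C'$ to $U$; once this is granted, the generic-node case is Proposition~\ref{prop:klein} and the remaining local counts are elementary.
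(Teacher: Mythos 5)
Your proof is correct and takes the route the paper intends: the paper offers no written argument, simply declaring the lemma an immediate consequence of Proposition \ref{prop:klein}, and your write-up supplies exactly the details left implicit (localisation of the real inflection points of $C'$ near $N(C)\cup I(C)$ via the Hessian, persistence of a simple flex, the vacuous contribution of solitary real points, and the count of two flexes per real node from Proposition \ref{prop:klein}, followed by the Klein bound for the final assertion).
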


\subsection{Construction of hyperbolic curves with a maximally inflected outer oval}\label{sec:hilbert2}
For the rest of this section we fix a non-singular real ellipse $C_0$
in $\RR^2$. 
An oval $O$ of a maximally inflected non-singular curve $C$ is said to be maximally inflected, if $I(C)\subset O$.  
Let us consider the following properties for a non-singular maximally inflected hyperbolic curve $C$ of degree $2d$:
\begin{itemize}
\item $(\P)$:  $C$ has a maximally inflected outer oval and $\int(C)\cap\int(C_0)$ is convex; 
\item $(\P')$: $C\cap C_0$ is a set of $4d$ distinct real points 
lying on the outer oval of $C$,  none of them being an inflection point of $C$.
\end{itemize}

Next we proceed by induction to construct 
a family $(C_d)_{d\ge 1}$ of real algebraic curves 
with the above properties of degree $2d$ in $\RR^2$. Given two real algebraic curves $C$ and $L$ in $\RR^2$,
respectively, defined by  equations $P(x,y)=0$ and $Q(x,y)=0$,  a 
\emph{perturbation of $C$ by $L$} is a real algebraic curve
defined by the equation $P(x,y)+ \epsilon Q(x,y)=0$, with 
$\mid\epsilon\mid\ll 1$ a
real number. 
Note that two  perturbations of $C$ by $L$ using 
 parameters with opposite signs have a priori different topology
in $\RR^2$.
The family $(C_d)_{d\ge 1}$ is constructed as follows:

\begin{enumerate}
\item We choose for $C_1$ any non-singular  ellipse in $\RR^2$
intersecting $C_0$ in 4 real points.

\item Suppose that the curve  $C_{d-1}$ is constructed, and does not
  contain $C_0$ as a component. Consider
  $L_{d}$ a union of $2d$ real lines in $\RR^2$  
 intersecting $C_0$ in a set $P_{d}$ of $4d$ distinct real points such
 that $P_{d} \subset \ext(C_{d-1})$, and that any connected component
 of $C_0 \cap \ext(C_{d-1})$ contains an even number of points of
 $P_{d}$. These two conditions ensure that there exists a 
 perturbation of $C_{d-1}\cup C_0$
 by $L_d$ producing a non-singular hyperbolic curve, that we define to
 be $C_d$.
\end{enumerate}

\begin{prop}\label{prop:induction}
If $C_{d-1}$ satisfies properties $(\P)$ and $(\P')$, then 
$C_{d}$ also satisfies properties $(\P)$ and $(\P')$.
\end{prop}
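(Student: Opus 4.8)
The plan is to track the two properties $(\P)$ and $(\P')$ through the perturbation construction that defines $C_d$ from $C_{d-1}\cup C_0\cup L_d$, using Lemma \ref{Kharlamov} to control inflection points and an elementary convexity argument to control the shape of $\int(C_d)\cap\int(C_0)$. Throughout, I fix the perturbation parameter $\epsilon$ small enough that $C_d$ is non-singular and hyperbolic, as guaranteed by the construction in step (2).

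First I would verify property $(\P')$. The curve $C_{d-1}\cup C_0\cup L_d$ meets $C_0$ exactly in the $4(d-1)$ points of $C_{d-1}\cap C_0$ (which lie on the outer oval of $C_{d-1}$ and are not inflection points, by $(\P')$ for $C_{d-1}$) together with the $4d$ points of $P_d$, where by construction $L_d$ is chosen transverse to $C_0$; moreover along $C_0$ itself the perturbed equation $P_{d-1}\cdot P_{C_0}+\epsilon Q_{L_d}$ restricts to $\epsilon$ times a non-zero polynomial, so $C_0$ is not a component of $C_d$ and $C_d\cap C_0$ is finite. The key point is a local analysis near each of the $4(d-1)+4d$ intersection points of $C_0$ with $C_{d-1}\cup L_d$: near such a point the perturbation is a standard smoothing of a transverse double point, and for $\epsilon$ small exactly one of the two local branches of $C_d$ meets $C_0$, in exactly one point close to the original one. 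Counting, $C_d\cap C_0$ consists of $4(d-1)+4d$ points, but I need this to equal $4d$ — so in fact I must be more careful: the construction perturbs $C_{d-1}\cup C_0$ by $L_d$, so $C_0$ is one of the branches being smoothed, and after perturbation the piece of $C_d$ near $C_0$ is a small deformation of $C_0$ itself that no longer lies on $C_0$. The correct count is that $C_d\cap C_0$ consists of the $4d$ points of $P_d$ (slightly displaced), since along the old outer oval of $C_{d-1}$ the two branches $C_{d-1}$ and $C_0$ get smoothed away from each other and the deformed $C_0$-piece becomes part of $C_d$'s outer oval. I would make this precise by a Rolle/intermediate-value count of sign changes of $P_{C_{d-1}}$ along the deformed $C_0$, using that the $4(d-1)$ points of $C_{d-1}\cap C_0$ are non-inflectional (so the two curves cross transversally there). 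That the $4d$ intersection points lie on the outer oval of $C_d$ and are not inflection points follows because $P_d\subset\ext(C_{d-1})$ and $C_0$ is convex, so near these points $C_d$ agrees up to $C^1$ with a convex arc of $C_0$, hence is locally non-inflected; transversality of $L_d$ and $C_0$ gives distinctness. This establishes $(\P')$ for $C_d$.

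Next I would treat property $(\P)$. For the inflection count: $C_{d-1}\cup C_0\cup L_d$ is nodal, its real nodes are exactly the transverse intersection points just discussed, and $C_d$ is a non-singular small deformation of it, so Lemma \ref{Kharlamov} applies provided $C_{d-1}\cup C_0\cup L_d$ is maximally inflected in the sense of \eqref{equation} — which follows from counting: $\#I = \#I(C_{d-1}) = (2d-2)(2d-4)$ (all on the outer oval of $C_{d-1}$, since $C_0$ and $L_d$ contribute no inflections, being a conic and lines), plus $2\#N$ with $\#N$ the total number of nodes, and one checks $(2d-2)(2d-4) + 2\#N = 2d(2d-2)$ by a direct B\'ezout-type count of $|C_{d-1}\cap C_0| + |C_{d-1}\cap L_d| + |C_0\cap L_d|$. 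Hence all $2d(2d-2)$ inflection points of $C_d$ are concentrated near the real locus of $C_{d-1}\cup C_0\cup L_d$; by Proposition \ref{prop:klein} each node spawns its two inflections on a small neighborhood, and by the local picture above all such neighborhoods meet only the outer oval of $C_d$ (the nodes sit on $C_0$, on the old outer oval of $C_{d-1}$, or on $L_d$, all of which get absorbed into, or lie outside, the new outer oval). The interior ovals of $C_d$ are small deformations of the interior ovals of $C_{d-1}$ and carry no inflection points, so the outer oval of $C_d$ is maximally inflected. For convexity of $\int(C_d)\cap\int(C_0)$: by $(\P)$ for $C_{d-1}$ the set $\int(C_{d-1})\cap\int(C_0)$ is convex, the perturbation by $L_d$ (with $P_d\subset\ext(C_{d-1})$) alters $C_{d-1}$ only outside $\int(C_0)$, and the branch of $C_d$ coming from the deformed $C_0$ bounds, on its inner side, a region that is a small $C^1$-deformation of $\int(C_0)$ pushed inward, hence convex for $\epsilon$ small since strict convexity of $C_0$ is an open condition; intersecting a convex set with (a small perturbation of) a convex set gives a convex set.

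The main obstacle I expect is the step verifying that the $4d$ new intersection points $P_d$ (rather than the $4(d-1)$ old ones) are exactly what survives as $C_d\cap C_0$, and simultaneously that they lie on the \emph{outer} oval: this requires pinning down the global isotopy type of the smoothing $C_{d-1}\cup C_0\rightsquigarrow C_d$ determined by the chosen sign of $\epsilon$, i.e.\ understanding which pairs of local branches at each node get joined. The parity condition in step (2) — that every component of $C_0\cap\ext(C_{d-1})$ contains an even number of points of $P_d$ — is precisely what makes a \emph{hyperbolic} (hence consistently oriented) smoothing possible, and I would use this, together with the fact that $C_0$ and $C_{d-1}$ are already isotoped compatibly by $(\P)$, to show that the deformed $C_0$-arc and the old outer oval of $C_{d-1}$ fuse into a single new outer oval of $C_d$ carrying all of $P_d$. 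Once the isotopy type is nailed down, the inflection-count via Lemma \ref{Kharlamov} and the convexity statement are both soft.
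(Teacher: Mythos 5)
Your overall strategy---apply Lemma \ref{Kharlamov} to a maximally inflected nodal curve and then argue local convexity---is the same as the paper's, but the counting step at its core is carried out on the wrong nodal curve and does not close. The curve $C_d$ is defined by $P_{C_{d-1}}P_{C_0}+\epsilon Q_{L_d}=0$, so it is a small deformation of the degree $2d$ curve $C_{d-1}\cup C_0$, \emph{not} of the degree $4d$ curve $C_{d-1}\cup C_0\cup L_d$; as $\epsilon\to 0$ the zero locus converges to that of $P_{C_{d-1}}P_{C_0}$, and the points of $L_d$ away from $C_{d-1}\cup C_0$ play no role. Consequently the only nodes being smoothed are the $4(d-1)$ transverse intersection points of $C_{d-1}$ with $C_0$ (generic nodes thanks to $(\P')$ and the fact that $C_0$ is an ellipse), and the relevant identity is
$$\#I(C_{d-1})+2\cdot 4(d-1)=(2d-2)(2d-4)+8(d-1)=2d(2d-2),$$
Klein's bound for degree $2d$. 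Your proposed identity $(2d-2)(2d-4)+2\bigl(|C_{d-1}\cap C_0|+|C_{d-1}\cap L_d|+|C_0\cap L_d|\bigr)=2d(2d-2)$ is false: even if all those intersections were real and transverse (and nothing in the construction forces $L_d\cap C_{d-1}$ to be either), the left-hand side would equal $12d^2-4d$, while the right-hand side is $4d^2-4d$. So the step ``one checks \dots by a direct B\'ezout-type count'' fails, and Lemma \ref{Kharlamov} is being invoked for a deformation that does not exist: a degree $2d$ curve cannot be a small deformation of a degree $4d$ curve.

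A related symptom is your treatment of $C_d\cap C_0$: restricting the defining equation of $C_d$ to $C_0$ kills the term $P_{C_{d-1}}P_{C_0}$ and leaves $\epsilon Q_{L_d}$, so $C_d\cap C_0$ equals $C_0\cap L_d=P_d$ exactly (not ``slightly displaced''), with no Rolle-type count or branch-matching analysis needed; this is how the paper gets property $(\P')$ essentially for free, locating $I(C_d)$ in neighborhoods of $I(C_{d-1})$ and of $P_{d-1}$ that avoid $P_d$. Once the nodal curve is corrected to $C_{d-1}\cup C_0$, the rest of your argument (all nodes lie on the outer oval of $C_{d-1}$ and on $C_0$ by $(\P')$, so by Proposition \ref{prop:klein} the new inflection points appear on the outer oval of $C_d$; local convexity near $P_d$ because there $C_d$ is a perturbation of the strictly convex $C_0$) does line up with the paper's proof.
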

\begin{proof}
By assumption we have:
 $$\#I(C_{d-1})+2\,\#N(C_0\cup C_{d-1})=2(d-1)(2(d-1)-2)+8(d-1)=2d(2d-2).$$
Hence $C_0 \cup C_{d-1}$ is a maximally inflected real hyperbolic 
curve. Property $(\P')$ implies that this latter curve is generically
inflected. Lemma \ref{Kharlamov} implies that $C_d$ is maximally
inflected.
Moreover, property $(\P)$ implies that 
$I(C_{d-1})\subset \ext(C_0)$ and that each node of $C_0 \cup C_{d-1}$
produces two real inflection points of $C_d$ on the outer oval of
$C_d$ (see Figure \ref{fig:smoothing node}b).
Hence $C_d$ has a maximally inflected outer oval, and
$I(C_{d})\subset \ext(C_0)$. 

To prove that  $\int(C_d)\cap\int(C_0)$
is convex, it is enough to show that $\int(C_d)$ is locally convex at
each point of the outer oval of $C_d$ contained in $\int(C_0)$. 
However this is true for points  close to $P_d$, since $C_d$
is a perturbation of $C_0$ in a neighborhood of a point in $P_d$. 
Moreover, this is true for all points since
 $I(C_{d-1})\subset \ext(C_0)$; and hence 
$\int(C_d)\cap\int(C_0)$ is convex.

By construction we have $C_d\cap C_0=P_d$ which has
cardinality $4d$. 
Since $I(C_{d-1})\cap C_0=\emptyset$, 
 there exists a small neighbourhood $U$ of $I(C_{d-1})$
 such that $U\cap C_0=\emptyset$.  
Let $U'$ be a neighbourhood of 
$P_{d-1}$ such that 
$P_d \cap U' = \emptyset$. 
Then 
 $I(C_d)\subset U\cup U'$, so $C_d$ satisfies property $(\P')$
\end{proof}

Note that $C_1$ obviously satisfies properties $(\P)$ and
$(\P')$. Hence Proposition \ref{prop:induction} ensures the existence
of  a family $(C_d)_{d\geq 1}$ containing only  non-singular  hyperbolic 
curves in $\RR^2$ with a maximally inflected outer oval.

\subsection{Proof of Theorem \ref{T2}}\label{sec:hilbert3}
Here we explain how to control the numbers $\S(C_d)$ in the
previous construction (see  section \ref{sec:intro line} for the definition of $\S(C)$).
For this, at each step we require the additional condition 
that the set $P_d$ is disjoint from the endpoints of line segments in
$\Conv(C_{d-1}\cup C_0)$ (see item
(2) in section \ref{sec:hilbert2} for the definition of $P_d$). We denote by $r_{d}$ the number of points in
$P_d$ contained in $\partial \Conv(C_{d-1}\cup C_0)$.
Note that we may freely choose the number $r_d$, $d\ge 2$, between $0$
and $4d$.

\begin{prop}
For any integer $d\geq 1$, one has
 $\S(C_{d} )=\S(C_{d-1}\cup C_0)$ and  
 $\S(C_{d}\cup C_0 )=\S(C_d)+r_d$.

\end{prop}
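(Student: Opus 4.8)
The plan is to prove the two equalities separately, using the way $C_d$ is built as a perturbation of $C_{d-1}\cup C_0$ by the line arrangement $L_d$, and keeping track of which perturbations happen near which points of $C_0$.

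First I would establish $\S(C_d)=\S(C_{d-1}\cup C_0)$. Since $C_d$ is a small perturbation of $C_{d-1}\cup C_0$, and perturbations are supported in small neighbourhoods of the $4d$ points of $P_d=C_0\cap L_d$, outside these neighbourhoods $C_d$ agrees with $C_{d-1}\cup C_0$. By property $(\P)$ applied to $C_{d-1}$, we know $\int(C_{d-1})\cap\int(C_0)$ is convex, so the only part of $C_{d-1}\cup C_0$ that is relevant to $\partial\Conv(C_{d-1}\cup C_0)$ is the outer oval of $C_{d-1}$ together with the arcs of $C_0$ lying in $\ext(C_{d-1})$; moreover the perturbation happens along $C_0$ in $\ext(C_{d-1})$. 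The key local observation is that near a point of $P_d$, the perturbation $C_d$ of $C_0\cup L_d$ follows $C_0$, and since $C_0$ is a (strictly convex) ellipse, the perturbed arc still lies on the convex side, so no new line segment of $\partial\Conv$ is created and none is destroyed there. Away from $P_d$ nothing changes. Hence $\partial\Conv(C_d)$ has exactly the same line segments as $\partial\Conv(C_{d-1}\cup C_0)$, giving the first equality.

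Next I would prove $\S(C_d\cup C_0)=\S(C_d)+r_d$. The curve $C_d\cup C_0$ differs from $C_d$ only by adding back the full ellipse $C_0$. Since $\int(C_d)\cap\int(C_0)$ is convex and $C_d\cap C_0=P_d$ consists of $4d$ points lying on the outer oval of $C_d$, the convex hull $\Conv(C_d\cup C_0)$ is obtained from $\Conv(C_d)$ by bulging out along the arcs of $C_0$ that stick outside $C_d$. Each line segment of $\partial\Conv(C_d)$ either still bounds $\Conv(C_d\cup C_0)$, or it has an endpoint that is "pushed out" by an arc of $C_0$. The extra condition we imposed — that $P_d$ avoids the endpoints of line segments in $\Conv(C_{d-1}\cup C_0)$, equivalently (by the first equality) in $\Conv(C_d)$ — ensures these two phenomena interact cleanly: each point of $P_d$ lying on $\partial\Conv(C_{d-1}\cup C_0)$, i.e. each of the $r_d$ points, lies in the interior of a line segment of $\partial\Conv(C_d)$ and, when $C_0$ is added back, splits that segment (the convex ellipse arc pokes out between the two intersection points with $C_d$ bounding that portion), while the points of $P_d$ not on $\partial\Conv(C_d)$ do not affect $\partial\Conv$. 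A careful bookkeeping of how $C_0$ re-enters and exits $\Conv(C_d)$ through the $r_d$ relevant points then yields exactly $r_d$ new line segments, so $\S(C_d\cup C_0)=\S(C_d)+r_d$.

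The main obstacle I expect is the local/global analysis of $\partial\Conv$ in the second equality: one must argue that the arcs of $C_0$ poking out of $\Conv(C_d)$ are precisely controlled by the $r_d$ points of $P_d$ on $\partial\Conv(C_d)$, that each such arc contributes exactly one new line segment (not two or zero), and that no line segment of $\partial\Conv(C_d)$ is absorbed into a larger one in a way that changes the count. This requires using convexity of $\int(C_d)\cap\int(C_0)$ together with the genericity hypothesis that $P_d$ misses the segment endpoints, and checking the base case $d=1$ (where $C_0\cup C_1$ is a union of two ellipses meeting in $4$ points, so $\S(C_1\cup C_0)=4$) to anchor the induction. The first equality, by contrast, is essentially a statement about perturbations near a convex arc and should be routine once the local picture of Figure~\ref{fig:smoothing node} is invoked.
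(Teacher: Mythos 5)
Your proof of the first equality rests on a false premise. The perturbation taking $C_{d-1}\cup C_0$ to $C_d$ is \emph{not} supported in neighbourhoods of the points of $P_d$: it is a global perturbation of the polynomial $f_{d-1}f_0$ whose essential topological effect is to smooth the $4(d-1)$ nodes of $C_{d-1}\cup C_0$ located at $P_{d-1}=C_{d-1}\cap C_0$ (the set $P_d=L_d\cap C_0$ is merely where the new non-singular curve $C_d$ crosses $C_0$, and becomes relevant only for the \emph{second} equality and the next induction step). Since $C_{d-1}\cup C_0$ is singular and $C_d$ is not, they certainly do not ``agree away from $P_d$''. Several of the nodes at $P_{d-1}$ sit on $\partial\Conv(C_{d-1}\cup C_0)$, each bridged there by a line segment, and by Proposition~\ref{prop:klein} their smoothing creates two real inflection points, i.e.\ a non-convex arc; your argument says nothing about why this neither destroys those bridging segments nor creates extra ones. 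More importantly, you never exclude the one mechanism by which a genuinely new segment of $\partial\Conv(C_d)$ could appear away from any pre-existing segment: a segment shrinking, as the perturbation parameter tends to $0$, to a smooth point where the limit curve meets its tangent with multiplicity at least $4$. The paper's proof is exactly (i) stability, giving $\S(C_d)\ge\S(C_{d-1}\cup C_0)$, and (ii) the exclusion of that degeneration because $C_{d-1}$ is maximally, hence generically, inflected. You never invoke the generically inflected hypothesis, and that hypothesis is the crux of this half of the proposition.

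For the second equality your count (one new segment for each of the $r_d$ points) agrees with the paper, but the local mechanism you describe is off. A point $p\in P_d\cap\partial\Conv(C_{d-1}\cup C_0)$ does not lie in the interior of a line segment of $\partial\Conv(C_d)$; it lies on a smooth, locally convex arc of $C_0$, hence of $\partial\Conv(C_d)$ after perturbation. The new segment arises because $C_d$ and $C_0$ cross transversally at $p$, so $p$ is a node of $C_d\cup C_0$ lying on the hull boundary, and the boundary of the convex hull of two transversally crossing arcs must bridge over the crossing by a small segment; points of $P_d$ interior to $\Conv(C_{d-1}\cup C_0)$ contribute nothing. You explicitly defer the ``careful bookkeeping'' that each such point contributes exactly one segment and that no existing segment is absorbed when the arcs of $C_0$ in $\ext(C_d)$ are added back (this is where property $(\P)$ and the hypothesis that $P_d$ avoids segment endpoints are used), so this half is a correct plan rather than a proof.
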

\begin{proof}
By 
stability, we have
$\S(C_d) \geq \S(C_{d-1}\cup C_0)$. If this inequality were
strict, there would exists a line segment in $\partial\Conv(C_d)$ coming from the
perturbation of a real point of $C_{d-1}$ at which $C_{d-1}$ has intersection
multiplicity at least 4 with its tangent. 
However the curve $C_{d-1}$ is maximally inflected, and in particular
 is generically inflected. Hence all this together 
 proves that 
$\S(C_d) = \S(C_{d-1}\cup C_0)$.

There are two types of line segments contained in $\partial \Conv(C_{d}\cup
C_0)$: those which are a perturbation of a line segment contained in
$\partial \Conv(C_{d-1}\cup C_0)$, and those which come from the perturbation
of $C_{d-1}\cup C_0$ in a neighborhood $U_p$ of a point
$p\in P_d \cap \partial \Conv(C_{d-1}\cup C_0) $; see Figure \ref{fig:otrodibujo}.
The set $\partial\Conv\left((C_d\cup C_0)\cap U_p\right)$
contains a line segment. Hence the set $\partial\Conv\left(C_d\cup
C_0\right)$ contains exactly $r_d$  line segments more than
$\partial\Conv\left(C_{d-1}\cup
C_0\right)$.
 \begin{figure}[!] 
   \centering
   \includegraphics[width=5in]{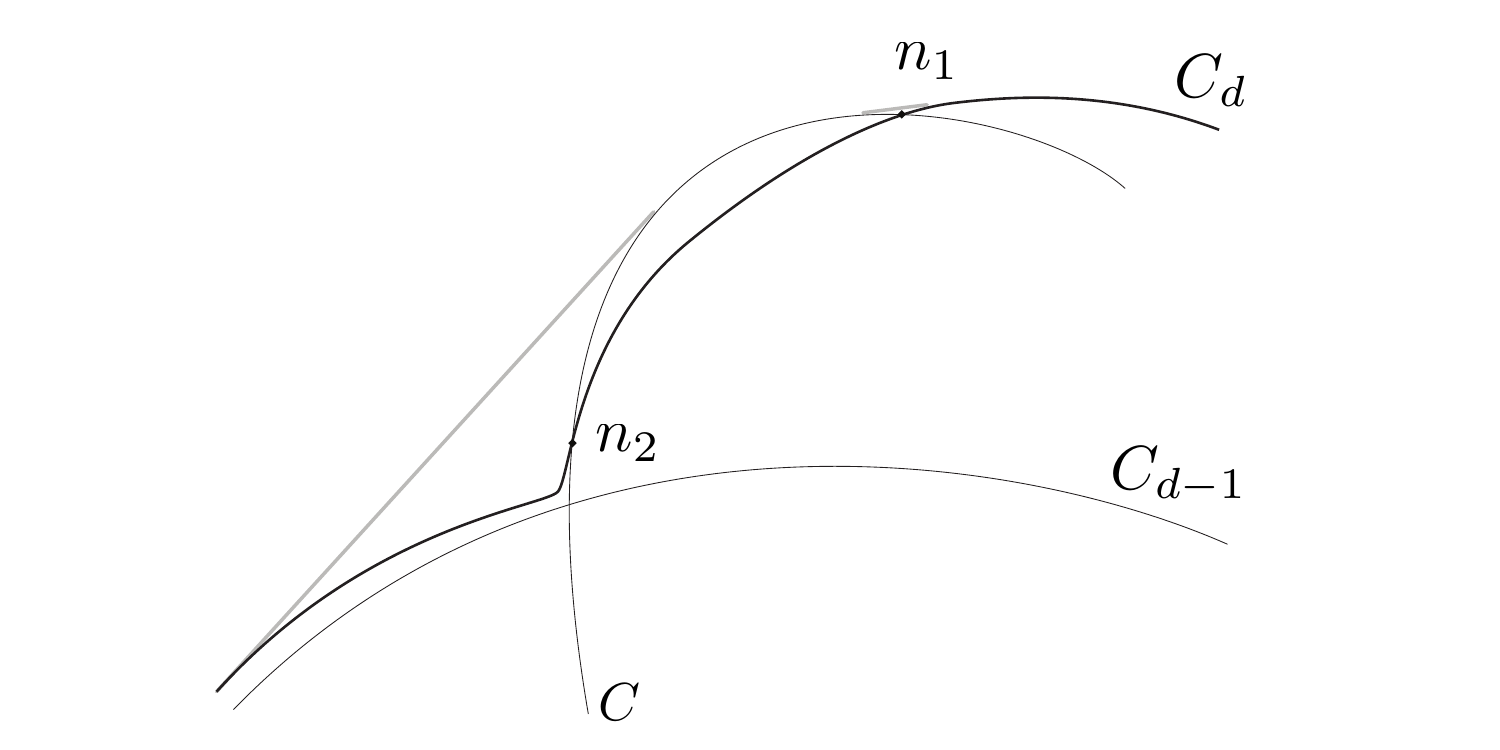} 
   \caption{Gray lines correspond to  line segments in  $\partial\Conv(C_0\cup C_d)$. The node $n_1$ is a point in $P_d$ containted in $\partial \Conv(C_{d-1}\cup C_0)$. The node $n_2$ is a point of $P_d$ contained in the interior of $ \Conv(C_{d-1}\cup C_0)$.}
   \label{fig:otrodibujo}
\end{figure}
\end{proof}

Now, Theorem \ref{T2} follows from the fact that we can choose the number $r_d$ between $0$ and $4d$ freely, and because  $\S(C_{1})=0$ and  $\S(C_{1}\cup C_0 )=4$.

\begin{exa}
Let us 
give
an explicit 
equation of
 a degree 6 polynomial  
defining a non-singular hyperbolic curve of degree $6$ in $\RR^2$
with 12 line segments on the boundary of its convex hull.
For that, consider three numbers $\epsilon_{1} =1.58$, $\epsilon_{2}=
-5.5\times 10^{-3}$ and $\epsilon_{3} = -10^{-10}$, and denote by
$C:=C(x,y)= x^2+y^2-1$. 
Consider the following polynomial:
$$
f(x,y) := {C}^{3}+{C}^{2}\left( {y}^{2}-{a}^{2}{x}^{2} \right) \epsilon_{{1}}+
C \left( {y}^{2}-{x}^{2} \right)  \left( {y}^{2}-{b}^{2}{x}^{2} \right) \epsilon_{{2}}
+  \epsilon_{{3}},
$$
where $a=\tan(\pi/12)$ and $b=\tan(5\pi/12)$. Figure \ref{fig:lacurvadegradoseis} is a numerical plot of the zero locus of $f$.
In the construction of this example we used two lines through the origin at angles $\pm \frac{\pi}{12}$  in the first step, and four lines through the origin at angles $ \pm \frac{\pi}{4}, \pm \frac{5\pi}{12} $ in the second step (all angles are measured with respect to the positive $x$-axis).
\begin{figure}[htbp] 
   \centering
   \includegraphics[width=3in, angle = -90]{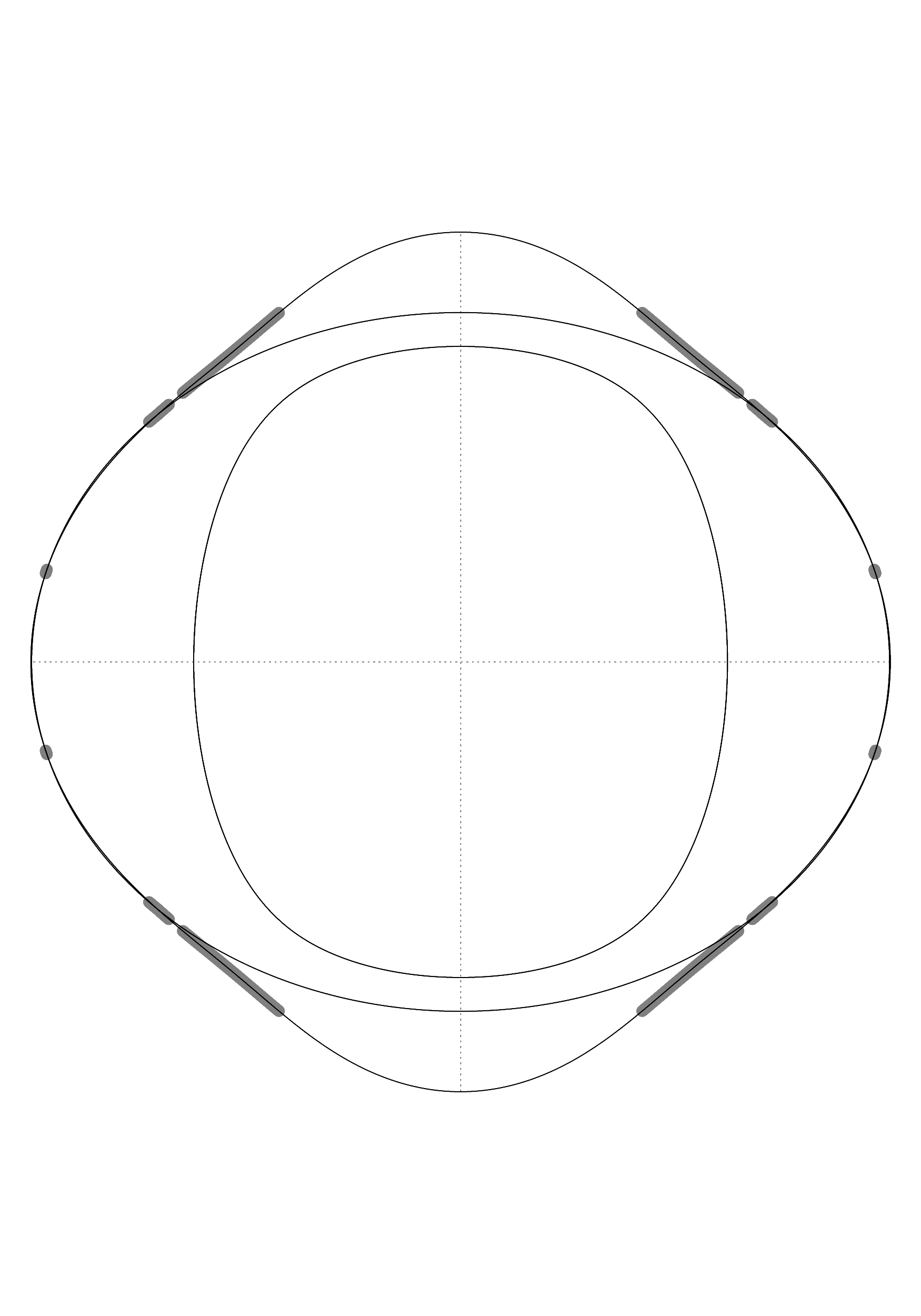} 
   \caption{A  non-singular hyperbolic curve of degree 6 in $\RR²$ with exactly 12 line segments in the boundary of its convex hull. Shaded regions on the outer oval (three in each quadrant) correspond to points in the curve contained in the interior of the convex hull of the curve. }
   \label{fig:lacurvadegradoseis}
\end{figure}
The plot in Figure \ref{fig:lacurvadegradoseis} is obtained by finding numerically the roots of the real one variable polynomial defined by the intersection of the curve with lines through the origin.
\end{exa}

\section{Maximally inflected hyperbolic curves of degree
  6}\label{sec:degree 6}

Now we turn to the study all possible arrangements of the $24$ real inflection
points of a 
non-singular maximally inflected hyperbolic sextic $C$ in 
$\RP$.
We denote by $O_1,O_2,O_3$ the three ovals of $C$ in such a way
that $O_1$ contains $O_2$ which in its turn contains $O_3$.
By
Bezout Theorem, the oval $O_3$  does
not contain any real inflection point. Moreover, each oval $O_1$ and
$O_2$ contains an even number of real inflection points.

\subsection{Obstructions}
Here we prove that the oval $O_2$ cannot contain
more than $18$ real 
inflection points.  Proposition  \ref{prop:min 6} is a straightforward
consequence of
Lemmas \ref{lem:prohib} and \ref{lem:bitangent}, that we prove in the
rest of this section.

\begin{prop}\label{prop:min 6}
Let $C$ be a non-singular maximally inflected algebraic hyperbolic curve of
degree 6 in $\RP$. Then the oval $O_1$ contains at least 6 real inflection points.
\end{prop}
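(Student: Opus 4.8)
The plan is to argue by contradiction: suppose that the outer oval $O_1$ of a non-singular maximally inflected hyperbolic sextic $C$ contains at most $4$ real inflection points. Since $O_3$ carries no inflection points and both $O_1$ and $O_2$ carry an even number of them, while the total is $24$, the oval $O_2$ would then carry at least $20$ of them; combined with the bound of Proposition \ref{prop:min 6}'s companion statement (that $O_2$ cannot contain more than $18$) this already gives a contradiction once I have that companion bound in hand. So the real work is to prove that $O_2$ carries at most $18$ real inflection points, equivalently $O_1$ carries at least $6$. I would phrase everything in terms of $O_1$ directly so as to land exactly on the statement of Proposition \ref{prop:min 6}.

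First I would isolate, via the two auxiliary lemmas referenced in the excerpt (Lemma \ref{lem:prohib} and Lemma \ref{lem:bitangent}), the geometric configurations that an inflection-poor outer oval would force. The idea is that too many inflection points on $O_2$ force the \emph{projective} convex position of $O_1$ relative to $O_2$ to be very constrained: each inflection point of $C$ on $O_2$ lying on an arc that is ``bent away'' from $O_1$ produces, after passing to the dual picture or by a direct tangent-line count, either a real bitangent line or a real flex line meeting $C$ with high multiplicity, in a way incompatible with $C$ being hyperbolic of degree $6$. Concretely, Lemma \ref{lem:prohib} should prohibit certain mutual positions of a flex on $O_2$ and the oval $O_1$ (this is where one uses that $O_1$ encloses $O_2$ and that every line through the hyperbolicity point $p$ meets $C$ in $6$ real points), and Lemma \ref{lem:bitangent} should control how many bitangents such a configuration can carry.

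Then I would combine the two lemmas with a Bezout/Klein bookkeeping argument. The count goes roughly as follows: an inflection point on $O_2$ contributes a local tangent line which, because $C$ is hyperbolic, must intersect $O_1$ (indeed all of $C$) in the predicted number of real points; summing these intersection constraints over all flexes on $O_2$, and feeding in the prohibition from Lemma \ref{lem:prohib}, forces at least $6$ of the $24$ flexes off of $O_2$, hence onto $O_1$ (none can be on $O_3$). At this point one invokes Orevkov's braid-theoretic method, as the introduction announces, to rule out the borderline cases that the purely Bezout-style count leaves open — namely the configurations with exactly $4$ flexes on $O_1$ that are not yet excluded by tangent-line counts alone; here one encodes the pair $(O_1, O_2)$ together with a well-chosen auxiliary pencil of lines as a braid and checks that the resulting braid is non-quasipositive, contradicting the existence of the algebraic curve.

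The main obstacle I expect is this last step: the elementary tangent-line and Bezout counts will almost certainly not by themselves exclude the extremal distribution with $4$ flexes on $O_1$ and $20$ on $O_2$ (this is exactly why the paper says the proof ``uses in a crucial way that we deal with curves of degree 6'' and invokes Orevkov's method rather than a uniform argument). So the delicate part is setting up the right braid — choosing the pencil of lines and the trigonal-like projection so that the hyperbolicity of $C$ translates into a clean positivity statement about the braid, and then carrying out the quasipositivity obstruction (via the Murasugi–Tristram or HOMFLY-type inequalities that underlie Orevkov's method) to derive the contradiction. Everything before that is comparatively routine projective geometry and intersection theory, so I would spend most of the effort making the braid computation airtight and degree-$6$-specific.
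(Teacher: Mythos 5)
Your high-level strategy --- reduce the statement to bounding some auxiliary count attached to $O_2$, then kill the extremal cases with Orevkov's braid method and the Murasugi--Tristram inequality --- matches the paper's in outline, but the two steps that actually carry the proof are missing or misidentified. The essential reduction in the paper is Lemma \ref{lem:bitangent}: if $O_2$ carries exactly $2n$ real inflection points, then $C$ has \emph{exactly $n$ real bitangents tangent to $C$ at two points of $O_2$}. This is not a ``control on how many bitangents a configuration can carry''; it is an exact conversion of flexes into bitangent lines, proved by a global deformation argument: the space of non-singular hyperbolic sextics is connected (Nuij), a generic perturbation of three nested ellipses has neither real flexes nor real doubly-tangent bitangents, and Klein's local model shows that real inflection points appear and disappear in pairs simultaneously with a real bitangent tangent at two nearby real points. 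Your proposal contains no trace of this deformation/connectedness argument, and your substitute --- a ``Bezout/Klein bookkeeping'' count of intersections of flex tangent lines with $O_1$ --- is not in the paper and there is no indication it would force any flexes onto $O_1$.

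The second gap is in the braid-theoretic step. You propose to encode ``the pair $(O_1,O_2)$ together with a well-chosen auxiliary pencil of lines'' as a braid and derive a contradiction from non-quasipositivity. But the $\mathcal L_p$-scheme of a hyperbolic sextic alone (for $p$ inside $O_3$) is realizable --- hyperbolic sextics exist --- so no obstruction can come from the curve by itself; the distribution of inflection points is invisible to that link. The paper instead applies Orevkov's method to the \emph{reducible} pseudoholomorphic curve of degree $6+n$ formed by the sextic together with the $n$ doubly-tangent pseudolines produced by Lemma \ref{lem:bitangent}, computes the braid $b_n$ of the resulting $\mathcal L_p$-scheme, and shows via the Alexander polynomial at $\pm i$ and the Murasugi--Tristram inequality that $n\ge 10$ is impossible (Lemma \ref{lem:prohib}). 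Without first converting the flexes into extra components of the link, the quasipositivity obstruction you invoke has nothing to act on. So as written the proposal does not close: the two lemmas you defer to are precisely where the content lies, and your guesses at their statements would not assemble into a proof.
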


Let $C_0$ and $C_1$ be two non-singular real algebraic curves in 
$\RP$ of degree at least 3, such that there exists a continuous path $(C_t)_{0\le t\le 1}$ of 
non-singular real algebraic curves in 
$\RP$ from $C_0$ to $C_1$.
A real line $D$ is said to be \emph{deeply tangent} to the curve $C_t$
if there exists a point $p\in D\cap C_t$ such that the order of
contact of $C_t$ and $D$ is at least $4$. 

Recall that if $C_0$ and $C_1$ are
generic, then they do not have any deep tangent lines. Moreover Klein
proved in \cite{Kle76} (see also \cite{Ron98}) that in this
case, it is always possible to choose the family $(C_t)$ such that 
appearance and disappearance of real inflections points when $t$
varies
 occur only when passing through a curve $C_{t_0}$ with a unique deep
 tangent $D$ with order of contact exactly $4$ with $C_{t_0}$ at a unique
 point $p$ (see Figure \ref{fig:deformation}b). 
Then for $t=t_0 +\varepsilon$ with
 $-1\ll \varepsilon \ll 1$ a  non-zero real number, the line $D$ deforms to a
 real bitangent of $C_t$ tangent to $C_t$ in two real points close to $p$ (see Figure \ref{fig:deformation}a),
 and deforms to a
 real bitangent of $C_t$ tangent to $C_t$ to two complex conjugated
 points for $t=t_0 -\varepsilon$ (see Figure \ref{fig:deformation}c). 
The curve $C_{t_0 +\varepsilon}$ has two real inflection points close to $p$, that
 disappear when passing through $C_{t_0}$.
\begin{figure}[htbp] 
   \centering
\begin{tabular}{ccccc}
   \includegraphics[width=3.5cm]{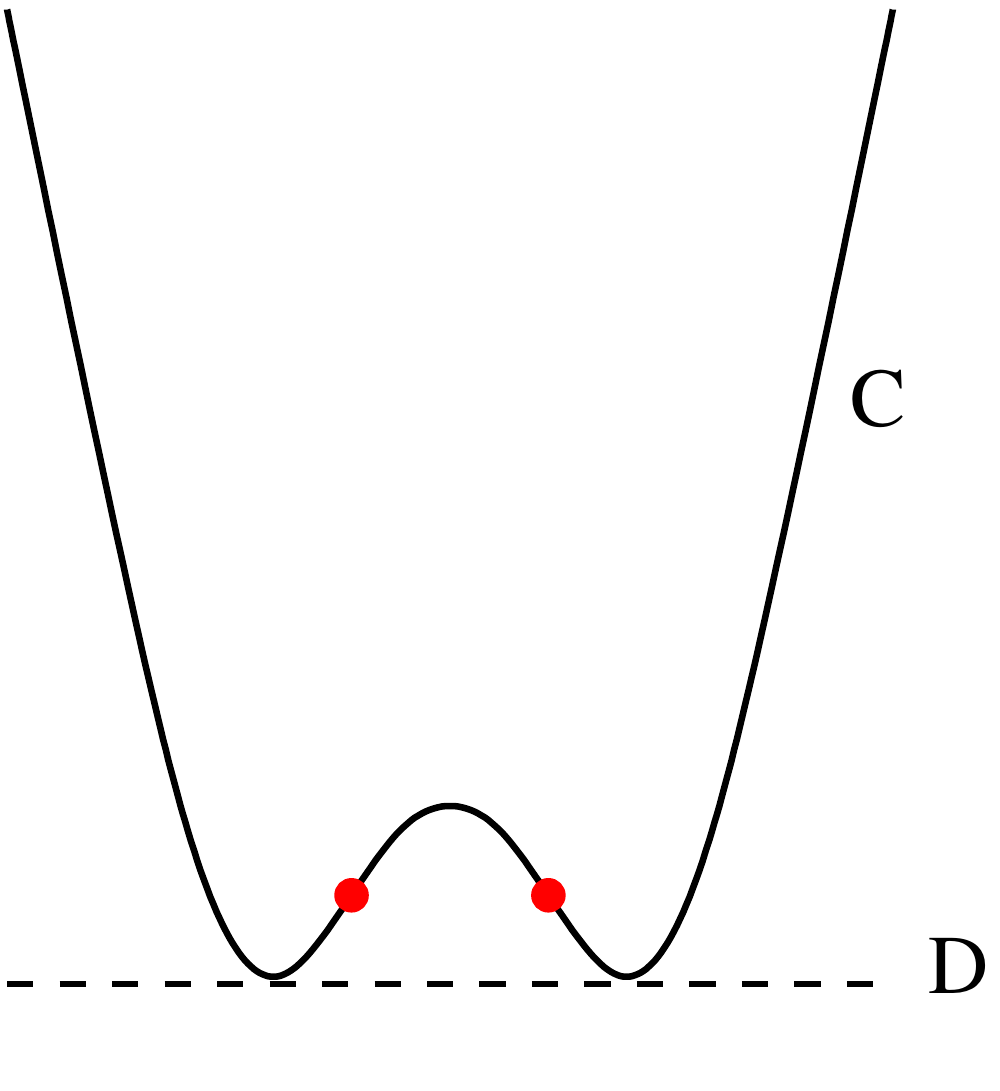} & \hspace{1cm} &
   \includegraphics[width=3.5cm]{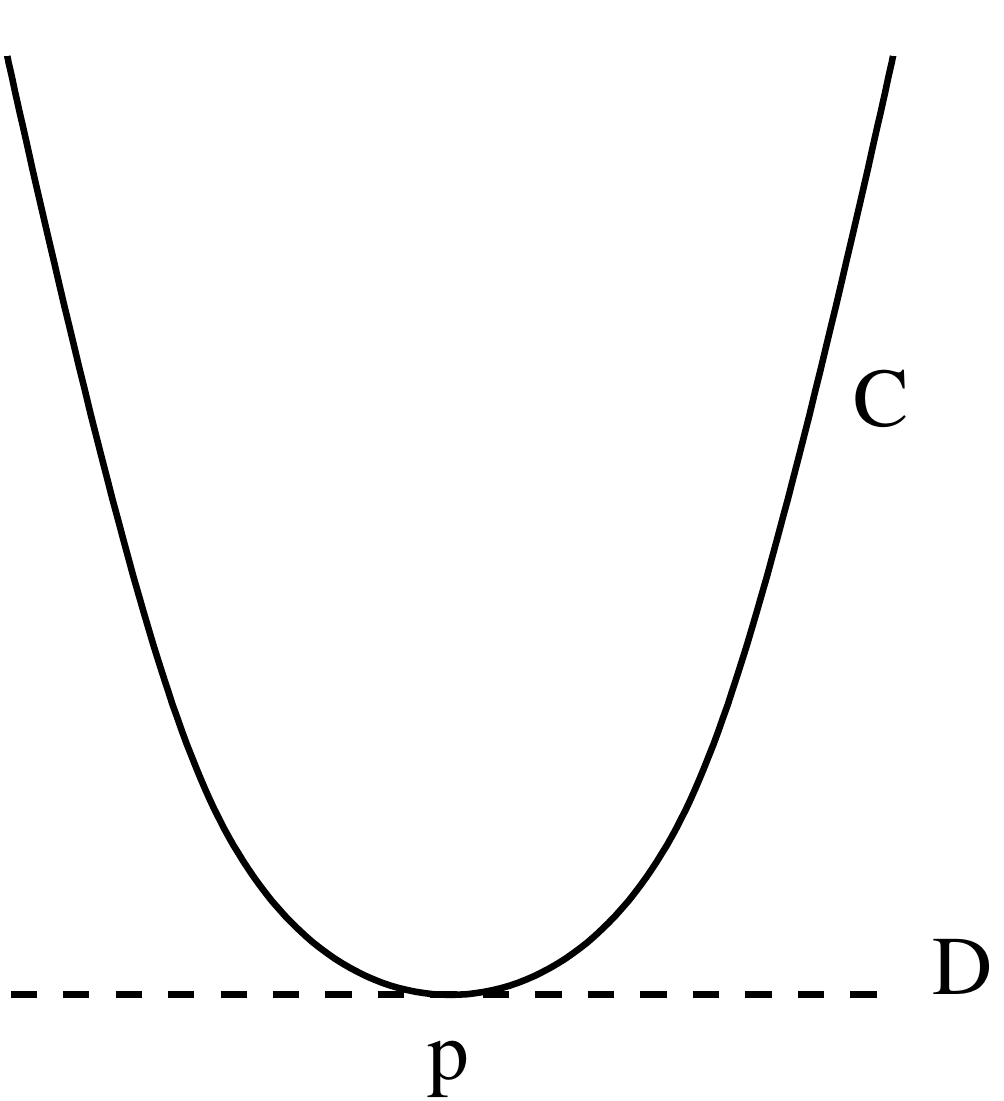} & \hspace{1cm} &
   \includegraphics[width=3.5cm]{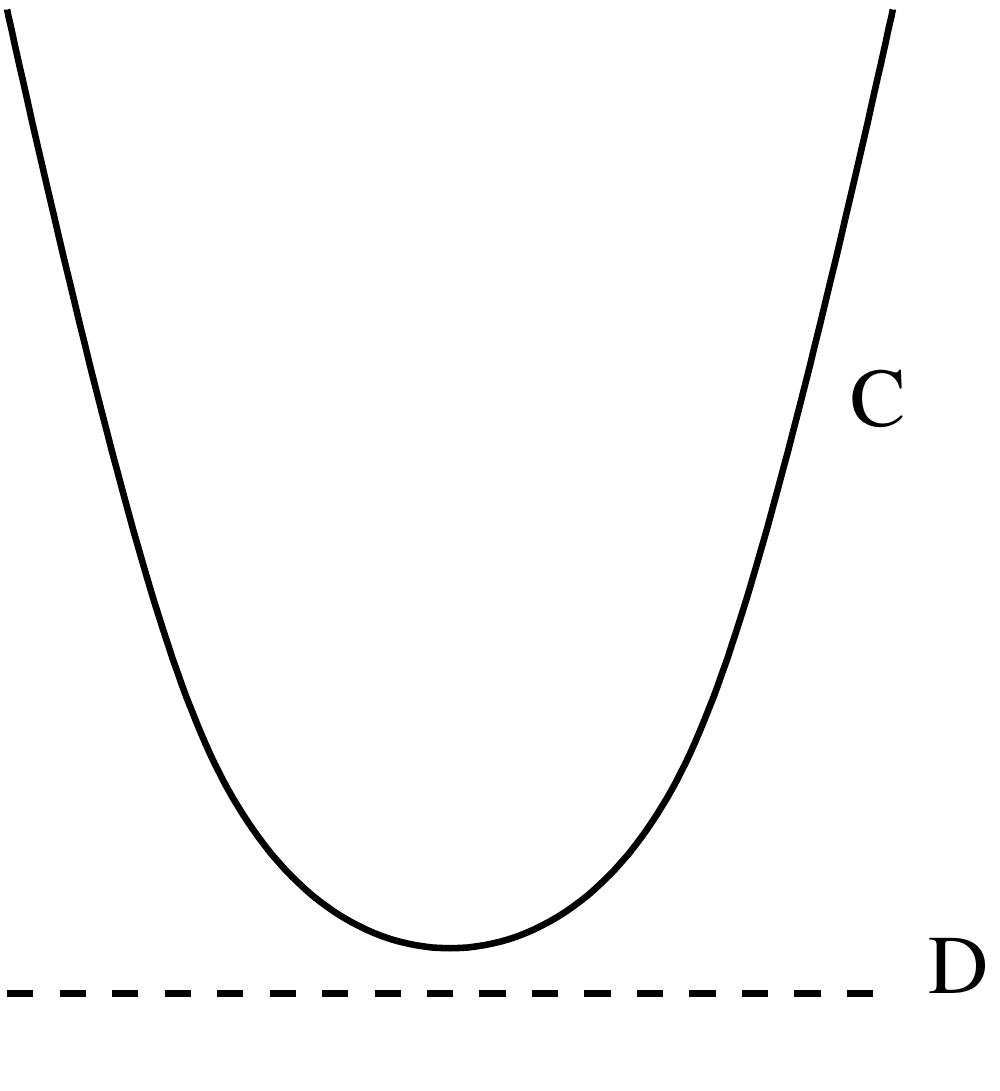}
\\ \\ a) $C=C_{t_0+\varepsilon}$ && b) $C = C_{t_0}$ && c) $C=C_{t_0-\varepsilon}$ 

\end{tabular}
   \caption{Appearance/disappearance of real inflection points}
   \label{fig:deformation}
\end{figure}

\begin{lemma}\label{lem:bitangent}
Let $C$ be a non-singular maximally inflected hyperbolic curve of
degree 6 in $\RP$ such that the oval $O_i$ contains exactly $2n$ real inflection
points. Then the curve $C$ has exactly $n$ real bitangents which are
tangent to $C$  at two
 points on $O_i$.
\end{lemma}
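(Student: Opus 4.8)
The plan is to count bitangents tangent to $C$ at two points of $O_i$ by deforming $C$ to a convenient generic curve and tracking how this count changes. First I would set up the deformation framework exactly as in the discussion preceding the lemma: connect $C$ through a path $(C_t)_{0\le t\le 1}$ of non-singular real sextics to a generic curve $C_1$, chosen so that changes in the real inflection count occur only when passing through a curve with a single deep tangent of order of contact exactly $4$. The key geometric input, already recorded in the excerpt, is that near such a transition a bitangent touching $C_t$ at two real points (both on the same oval, since the two tangency points are close to the single point $p$) turns into a deep tangent and then into a bitangent with two complex conjugate tangency points, while simultaneously two real inflection points on that oval appear or disappear. Thus along the path, the quantity (number of real bitangents tangent to $C_t$ at two points of a given oval) and the quantity (half the number of real inflection points on that oval) change \emph{in lockstep}: each crossing adds or removes exactly one such bitangent and exactly one pair of inflection points on the corresponding oval.

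Next I would exploit hyperbolicity to pin down which oval is affected at each crossing. Since every $C_t$ in the path can be taken hyperbolic (hyperbolicity is an open condition and is preserved under small non-singular deformations, and the path can be chosen inside the hyperbolic locus), the ovals $O_1(t), O_2(t), O_3(t)$ persist and the point $p$ of the deep tangency at a transition lies on exactly one of them — say $O_j(t)$. By Bezout, $O_3$ never carries inflection points, so transitions only ever occur at $p\in O_1$ or $p\in O_2$, and each such transition changes both the bitangent count "on $O_j$" and the inflection count on $O_j$ by the same amount. Summing the increments along the whole path gives, for each $i\in\{1,2\}$,
\[
\#\{\text{real bitangents of }C\text{ tangent at two points of }O_i\} - n_i
= \#\{\text{same for }C_1\} - n_i^{(1)},
\]
where $2n_i$ (resp. $2n_i^{(1)}$) is the number of real inflection points on $O_i$ for $C$ (resp. $C_1$).

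Finally I would establish the equality $\#\{\text{bitangents on }O_i^{(1)}\} = n_i^{(1)}$ for the generic endpoint $C_1$, which then transports back to $C$ via the displayed relation. The cleanest choice is to take $C_1$ to be (a small perturbation of) a union of conics, or Klein's model curve, whose inflection points are all visibly concentrated on one oval and whose real bitangents are explicitly enumerable; alternatively one can argue by a further deformation to the case where one of $O_1,O_2$ carries \emph{no} inflection points — then that oval, being locally convex, has no bitangent touching it twice, giving the base case $0=0$, and the other oval inherits the count by the same bookkeeping. The main obstacle I anticipate is the second paragraph: one must argue carefully that the path $(C_t)$ can be chosen to stay hyperbolic \emph{and} to satisfy Klein's genericity of transitions simultaneously, and that no bitangent tangent at two points on \emph{different} ovals can be created or destroyed — this follows because such a bitangent would need its two tangency points to collide at a single deep-tangency point $p$, which lies on a single oval, so the two branches at $p$ belong to the same oval; but making this rigorous requires checking that a real bitangent with tangency points on two distinct nested ovals cannot degenerate, which I would handle by noting that the segment of the bitangent between the two outer ovals meets the innermost oval or uses up intersection multiplicity forbidden by Bezout for a hyperbolic sextic. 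Granting that, the lemma follows by the lockstep count.
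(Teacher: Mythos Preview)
Your approach is essentially the same as the paper's: deform within the space of non-singular hyperbolic sextics---which is connected by a result of Nuij that you should cite to justify staying inside the hyperbolic locus---to a base curve with no real inflection points and no real bitangent with two real tangencies (the paper takes a generic perturbation of three nested disjoint ellipses), and then track the simultaneous creation/annihilation at each Klein transition of one real bitangent and one pair of real inflection points on the oval containing the hyperflex. The paper's argument is terser but uses exactly the same ingredients, and your explicit worries about inter-oval bitangents are handled there by an appeal to Bezout in the hyperbolic setting.
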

\begin{proof}
Let $C_0$ be the non-singular hyperbolic real sextic in $\CP$ 
obtained by a generic perturbation of the union of $3$
nested disjoint ellipses. By
construction, $C_0$ does not have any real inflection points, and does
not have any real bitangent tangent to two real points. 
It follows from Bezout
Theorem that the number of real bitangents 
is invariant under deformation in the space of non-singular
real algebraic curves of degree 6. 
Since the space of non-singular hyperbolic  sextics in $\RP$ is
connected (see \cite{Nui68}), the result follows from
 the description of appearance/disappearance of real
inflection points given above.
\end{proof}

\begin{figure}[htbp] 
   \centering
\begin{tabular}{ccc}
   \includegraphics[width=6cm]{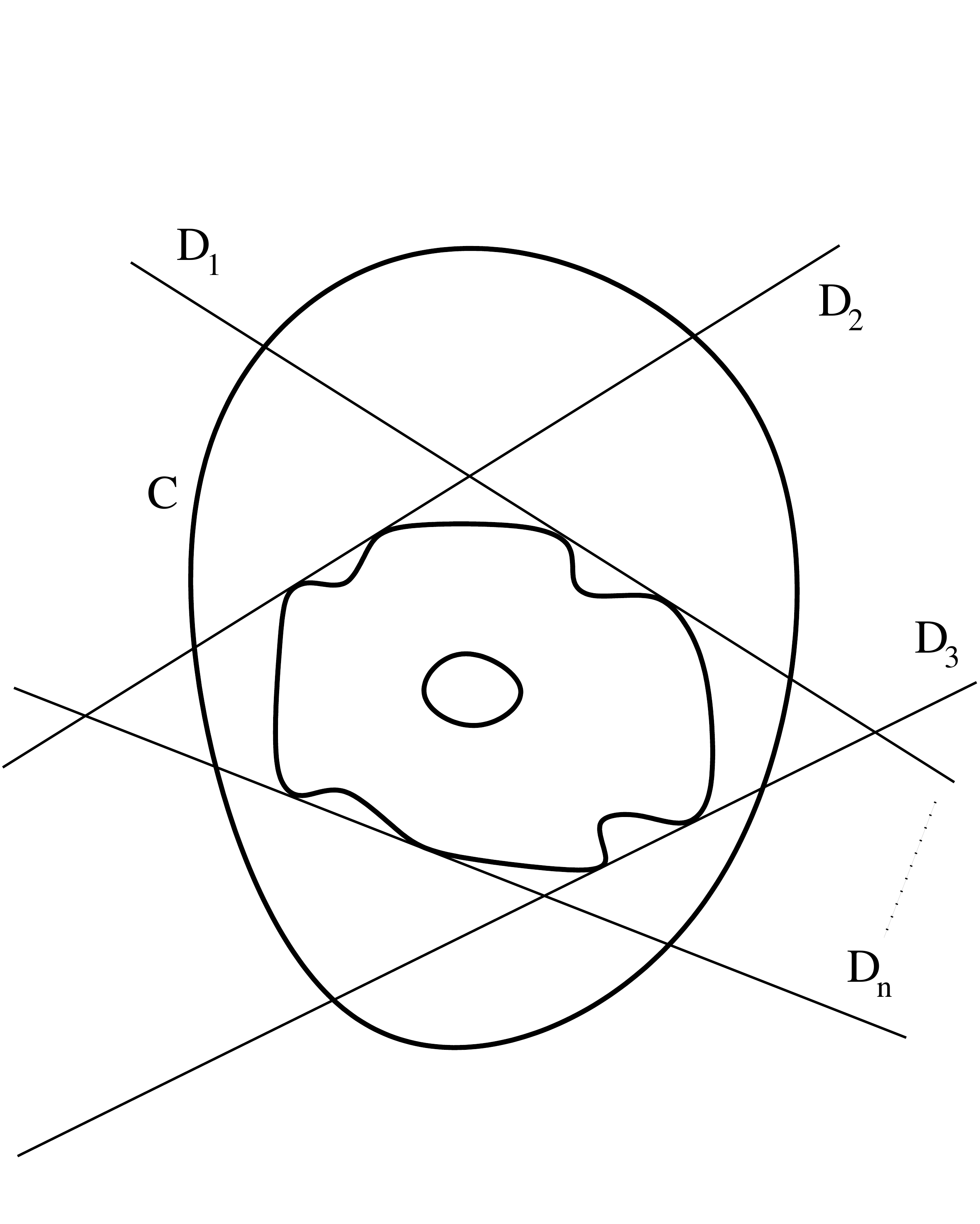} & \hspace{3ex} &
   \includegraphics[width=6cm]{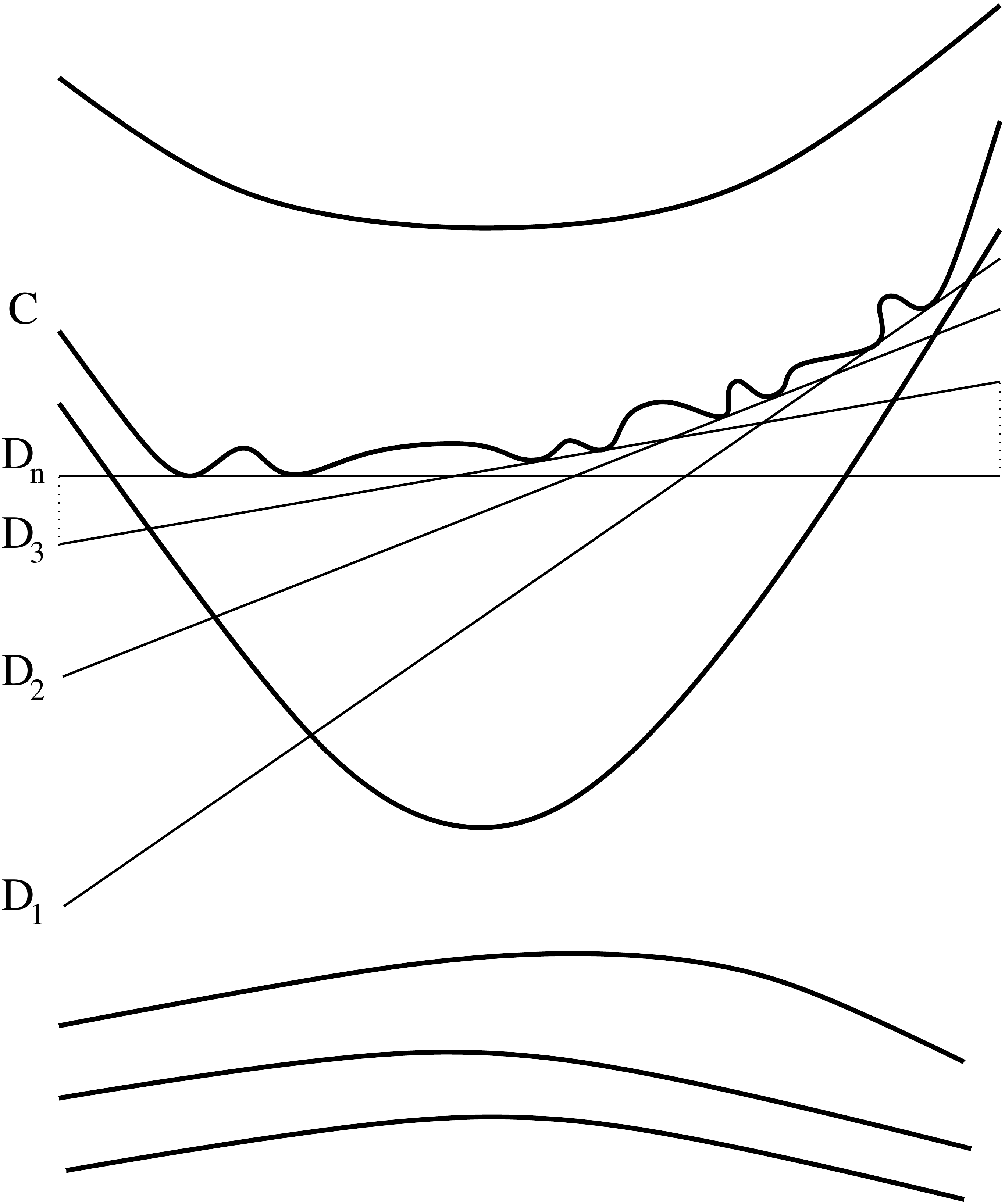}
\\ a) && b)
\end{tabular}
   \caption{}
   \label{fig:lp scheme}
\end{figure}
\begin{lemma}\label{lem:prohib}
Let $C$ be a non-singular maximally inflected hyperbolic curve of
degree 6 in
$\RP$ with exactly $n$ real bitangents 
tangent to $C$  at two
 points on $O_2$ (see Figure \ref{fig:lp scheme}a). Then $n\le 9$.
\end{lemma}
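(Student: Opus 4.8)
The plan is to bound the number $n$ of real bitangents tangent to the oval $O_2$ at two points by a Bezout-type / Orevkov braid argument. The key geometric observation is that each such bitangent $D_j$ ($j=1,\dots,n$) is a real line touching $O_2$ at two real points, and since $O_2$ is nested between $O_1$ and $O_3$, the line $D_j$ must either miss $O_3$ entirely or cross it; in any case $D_j$ together with the configuration of the three nested ovals produces a controlled braid or a controlled count of intersection points. First I would set up coordinates so that the hyperbolicity point $p$ lies inside $O_3$, and observe that every real line through $p$ meets each $O_i$ in exactly two real points; this gives a projection $\pi\colon C\to \mathbb{RP}^1$ which is a $6$-to-$1$ covering, and each oval maps $2$-to-$1$ onto $\mathbb{RP}^1$. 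A bitangent to $O_2$ at two points, read through this projection, forces a specific local picture of how the two tangency points sit over $\mathbb{RP}^1$.

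Next I would translate the existence of $n$ simultaneous bitangents into an intersection-theoretic constraint. The standard tool here (this is Orevkov's method, alluded to in the introduction) is to consider the union $C\cup D_1\cup\cdots\cup D_n$, or better, to encode the isotopy type of $C$ together with a pencil of lines as a quasipositive braid and to apply the slice-Bezgicheva/Orevkov inequality, or alternatively to use the complex orientation formula. Concretely: perturb slightly so that each bitangent becomes, say, a line meeting $C$ in the expected real points, compute the linking / braid-monodromy contribution of each tangency (each tangency of a line with $O_2$ contributes a full twist $\sigma_i^2$ in the braid group $B_6$), and then invoke the constraint that the total braid must be quasipositive (since $C$ is the union of an algebraic curve with real lines, coming from an actual algebraic situation). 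Counting the number of bands: the curve has $6$ strands, $O_2$ occupies two of them, and the configuration of nested ovals already ``uses up'' part of the allowed exponent sum; the bitangents to $O_2$ can contribute at most the remaining budget, which I expect to work out to exactly $9$.

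The hard part will be making the braid-counting sharp enough to get the exact bound $n\le 9$ rather than something weaker. There are $24$ real inflection points total distributed among $O_1$ and $O_2$ (none on $O_3$), so $O_1$ carries $24-2n$ of them, and Lemma~\ref{lem:bitangent} says $O_1$ carries $12-n$ bitangents of its own type; the global count of real bitangents of a nonsingular sextic that are tangent at two real points is bounded (it relates to the $28$ bitangents over $\CC$, with hyperbolic real structure constraining how many are totally real), and one must also account for bitangents tangent to $O_1$ and $O_2$ simultaneously, to $O_1$ and $O_3$, etc. I would organize the argument by: (i) fixing the projection from the hyperbolicity point; (ii) showing each $O_2$-bitangent is disjoint from $\mathrm{Int}(O_3)$ (hyperbolicity forces the tangent line at a point of $O_2$ to separate locally, and a global convexity-in-the-pencil argument rules out secant lines through the inner region); (iii) reducing to a planar arrangement of $n$ lines all tangent to the single convex-ish curve $O_2$ from outside, plus the nested structure; and (iv) applying Orevkov's inequality to the link of $C$ with this line arrangement to extract $n\le 9$. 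I expect step (iv)—getting the numerology of the braid inequality to land exactly on $9$—to be where essentially all the work lies, and I would double-check it against the degree-$6$ specifics (the bound would not survive in higher degree, consistent with the remark in the introduction that the analogue for $2d>6$ is open).
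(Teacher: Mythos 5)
Your overall strategy---project from a point inside $O_3$, adjoin the $n$ bitangents to $C$, encode the resulting arrangement as a braid via Orevkov's $\mathcal L_p$-scheme formalism, and derive a contradiction for large $n$---is exactly the strategy of the paper. But the step you defer to the end (``getting the numerology of the braid inequality to land exactly on $9$'') is where the proof actually lives, and the specific mechanism you propose for it would not work. You suggest bounding $n$ by an exponent-sum/quasipositivity ``budget'' count (each tangency contributing $\sigma_i^{2}$, the nested ovals using up part of the allowed exponent sum). The paper's computation shows this is precisely the borderline case: for $n=10$ the associated braid $b_{10}$ has exponent sum $15$, which equals the number of strings minus $1$, so the naive exponent-sum obstruction is \emph{not} violated and no contradiction follows from counting bands alone. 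To rule out $n=10$ the paper must invoke the finer Murasugi--Tristram inequality, fed by the fact that $\pm i$ is a \emph{simple root} of the Alexander polynomial of $b_{10}$ (i.e.\ a signature/nullity obstruction at $\omega=i$), combined with {\cite[Lemma 2.1]{O1}}. Without writing down the explicit braid and performing this Alexander-polynomial computation, the bound $n\le 9$ is not established.

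Two further gaps worth naming. First, you propose to apply the braid inequality to the actual configuration $C\cup D_1\cup\cdots\cup D_n$, but the mutual position of the $n$ bitangent lines, the three ovals, and the pencil through $p$ need not realize any single standard $\mathcal L_p$-scheme; the paper explicitly does \emph{not} claim this, and instead uses {\cite[Proposition 3.6]{O1}} to reduce to the model scheme of Figure \ref{fig:lp scheme}b --- a reduction valid only in the pseudoholomorphic category, which is why the whole argument is run for pseudoholomorphic curves. Your plan of ``perturbing slightly'' does not substitute for this reduction. Second, some of the auxiliary numerology you invoke is off target: the $28$ bitangents belong to a smooth quartic, not a sextic, and neither the global bitangent count nor the complex orientation formula enters the paper's argument.
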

\begin{proof}
The proof uses
the symplectic techniques developed by Orevkov to study the topology of
real plane curves. For the sake of shortness, we do not recall this
technique here. We refer instead
to {\cite[Section 2.1]{O2}} for the definition of a 
real pseudoholomorphic curve in
$\CP$, and to {\cite[Section 3]{O1}} for the definition of a $\mathcal L_p$
scheme and for an account of the braid theoretical methods in the study
of  real plane curves. Note nevertheless that any
real algebraic curve  is a  real
pseudoholomorphic curve.

Choose a point $p$ inside the oval $O_3$.
Applying {\cite[Proposition 3.6]{O1}} if necessary,
 this implies that the $\mathcal L_p$-scheme depicted in Figure
 \ref{fig:lp scheme}b is realizable by a reducible real pseudoholomorphic
 curve of degree $6+n$, whose irreducible components are a sextic
 $C'$
and
 $l$  real pseudoholomorphic
lines $D_1,\ldots, D_n$ intersecting $C'$ transversely in two points
and tangent to $C'$ in two points.
We emphasize that we do not claim that the union of $C$ with its real bitangent
to $O_2$ realizes the $\mathcal L_p$-scheme depicted in Figure
 \ref{fig:lp scheme}b. This is precisely the point where we need to
 consider  real pseudholomorphic curves instead of real algebraic curves, since 
 {\cite[Proposition 3.6]{O1}} does not hold for those latter.

The braid associated to this $\mathcal L_p$-scheme is 
$$b_n=\prod_{i=4}^{n+3} \left[\left( \prod_{j=i}^{n+3}
  \sigma_{j}^{-1}\right) \sigma_{n+4}^{-4} \right]\cdot 
\prod_{j=4}^{n+3}  \sigma_{j}^{-1}\cdot 
\prod_{i=1}^{n+5} \left( \prod_{j=i}^{n+5}
  \sigma_{j}\right) . $$
Computing the Alexander polynomial $p_{10}$ of $b_{10}$, we see that
$\pm i$ is a
simple root of $p_{10}$. 
On the other hand the sum of exponents of
$b_{10}$ equals 15, which is the number of strings of $b_{10}$ minus
1. Hence
  the Murasugi-Tristram inequality {\cite[Section 2.4]{O1}} together
with {\cite[Lemma 2.1]{O1}} implies that the $\mathcal L_p$-scheme depicted in
Figure \ref{fig:lp scheme}b with $n\ge 10$ is not realizable by a real
pseudoholomorphic curve of degree $6+n$.
\end{proof}

\begin{proof}[Proof of Proposition \ref{prop:min 6}]
Let $2n$ be the number of real inflection points of $C$ contained in
$O_2$. According to Lemma \ref{lem:bitangent} the curve $C$ 
has $n$ real bitangents which are
tangent to $C$   at two
 points on $O_2$. Hence we get $n\le 9$ by Lemma \ref{lem:prohib}.
\end{proof}

\subsection{Constructions}
We end the proof of Theorem \ref{theo:hyp deg 6} by showing that
 all distributions of the 24 inflection points on the ovals $O_1$
and $O_2$ which are not
forbidden by Proposition \ref{prop:min 6} are realizable. 
\begin{prop}
For any integer $0\le k\le 9$,
there exists a
 non-singular maximally inflected hyperbolic curve $C(k)$ of
degree 6 in 
$\RP$ such that $O_1$ contains exactly $6+2k$ real inflection points.
\end{prop}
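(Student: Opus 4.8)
The plan is to construct the curves $C(k)$ for $0 \le k \le 9$ by exhibiting, for each value of $k$, a generically inflected hyperbolic sextic with generic nodes whose perturbation realizes the desired distribution. Since Bezout forbids inflection points on $O_3$, and each of $O_1$, $O_2$ carries an even number of real inflection points, with $\#I = 24$ in total, prescribing that $O_1$ contains $6+2k$ points is equivalent to prescribing that $O_2$ contains $18-2k$ points; so it suffices to produce for each $0 \le k \le 9$ a maximally inflected non-singular hyperbolic sextic with exactly $2(9-k)$ inflection points on $O_2$. By Lemma \ref{Kharlamov}, it is enough to construct a generically inflected hyperbolic sextic $C_k$ (nodal, all nodes generic with two real branches, lying on $O_2$) with $\#I(C_k)$ inflection points on $O_1$, $\#N(C_k)$ nodes on $O_2$, and $\#I(C_k) + 2\#N(C_k) = 24$, arranged so that a real smoothing keeps the three ovals nested; then a generic perturbation gives a non-singular hyperbolic sextic whose inflection points are distributed as $\#I(C_k)$ on $O_1$ together with $2\#N(C_k)$ on $O_2$.

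First I would handle the two extreme cases directly as a sanity check and as base cases. For $k = 9$ (all $24$ inflection points on $O_1$, none on $O_2$), the curve $C_3$ from the family $(C_d)$ built in Section \ref{sec:hilbert2} already has a maximally inflected outer oval, so $C(9) = C_3$ works. For $k = 0$ (that is, $6$ inflection points on $O_1$ and $18$ on $O_2$), one needs a generically inflected hyperbolic sextic with $3$ generic nodes on $O_1$ and $9$ generic nodes on $O_2$, or some mixture summing correctly; here is where the tropical construction from \cite{BLdM12} enters. The idea is to build, by patchworking / tropical methods, a real sextic together with a prescribed position of its real inflection points and nodes, with the required hyperbolic topology and the prescribed counts on each oval, and then smooth. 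I would set up a tropical curve of degree $6$ (a subdivision of the triangle $\{(i,j): i,j\ge 0,\ i+j\le 6\}$) whose dual real curve, obtained by Viro's patchworking, has three nested ovals, and whose "inflection" and "node" combinatorics — controllable via the techniques of \cite{BLdM12} — place $2(9-k)$ of the inflection-producing data on the middle oval and the rest on the outer one.

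The interpolation over all intermediate $k$ is then the heart of the argument: I would realize the general value of $k$ either (a) by a single tropical/patchworking family with a parameter that moves inflection-producing nodes one pair at a time from $O_1$ to $O_2$, or (b) by a Hilbert-style inductive modification of the construction of $C_2$ and $C_3$ in which, at the last perturbation step (from $C_2 \cup C_0$ to $C_3$), some of the $4d = 12$ intersection nodes are placed on the second oval $O_2$ rather than on the outer oval $O_1$, using the freedom in choosing the auxiliary lines $L_3$; smoothing those nodes then produces pairs of inflection points on $O_2$ by Proposition \ref{prop:klein}, and one checks the nesting is preserved exactly as in Proposition \ref{prop:induction}. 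Approach (b) seems cleanest because it reuses the machinery already in place, but requires verifying that one can indeed move nodes onto the inner oval while keeping all branches real and the smoothing hyperbolic.

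The main obstacle I expect is precisely that last verification: showing that for each $0 \le k \le 9$ there is an admissible configuration — hyperbolic topology, three nested ovals, the correct parity and count of nodes on each oval, all nodes generic with real branches — that admits a real smoothing compatible with the nesting. Parity constraints (the even-number condition on each oval, mirroring item (2) of the construction in Section \ref{sec:hilbert2}) must be satisfied simultaneously on $O_1$ and $O_2$, and one has to confirm the smoothing signs can be chosen to keep the curve hyperbolic; once the configuration is in hand, Lemma \ref{Kharlamov} does the rest automatically. I would therefore organize the proof as: (i) reduce to constructing $C_k$ via Lemma \ref{Kharlamov}; (ii) give the family of nodal configurations explicitly, either tropically or à la Hilbert, with a picture for each relevant case; (iii) check hyperbolicity of the smoothing and the node counts; (iv) conclude.
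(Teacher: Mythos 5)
You correctly reduce the problem (via Lemma \ref{Kharlamov} and the fact that $O_3$ carries no inflection points, so prescribing $O_1$ is the same as prescribing $O_2$), and you correctly identify $C(9)$ with the curve built in Section \ref{sec:hilbert2}. But for every other value of $k$ the construction --- which is the entire content of the proposition --- is only announced, not carried out: ``I would set up a tropical curve whose combinatorics place $2(9-k)$ of the inflection-producing data on the middle oval'' restates the goal rather than achieving it, and the interpolation over intermediate $k$, which you yourself call the heart of the argument, is left as a choice between two unexecuted strategies whose decisive verification you flag as an expected obstacle rather than resolve.

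Moreover, your strategy (b) does not work as stated. The nodes of $C_2\cup C_0$ are the $8$ (not $12$; Bezout for a conic and a quartic) intersection points of $C_0$ with the outer oval of $C_2$, and each such node lies on the common closure of the arcs that become $O_1$ and $O_2$ after smoothing, so a node cannot be ``placed on $O_2$ rather than on $O_1$''. Whether the two inflection points born at such a node land on $O_1$ or on $O_2$ is dictated by the local geometry of the two branches --- this is precisely what property $(\P)$, the convexity of $\int(C)\cap\int(C_0)$, pins down in Proposition \ref{prop:induction} --- and is not a free choice of smoothing sign. The mechanism the paper actually uses for odd $k$ is different: one controls where the $8$ inflection points of the intermediate quartic lie relative to $\int(C_0)$, by choosing the auxiliary lines to meet $C_0$ inside or outside $\int(C_0\cup C_1)$, so that after the final perturbation those inflection points end up on $O_2$ or $O_1$ as desired; a parity constraint restricts this to odd $k$. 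The even cases $k=0,2,4,6$ then require the tropical patchworking of \cite{BLdM12} with varying edge lengths, and $k=8$ needs a further adaptation (a nodal tropical curve, Shustin's version of patchworking, and a final smoothing via Proposition \ref{prop:klein}). None of these constructions, nor the parity issue that forces the case split, appears in your proposal, so there is a genuine gap.
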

\begin{proof}
The curve $C(9)$ has been constructed in Section
\ref{sec:hilbert2}. The curve $C(k)$ with $k=1,3,5$ and $7$ 
is constructed similarly. Consider two
real conics $C_0$ and $C_1$ intersecting into 4 real points. Choose
$\frac{k-1}{2}$ lines intersecting $C_0$ in $k-1$ points in
$\ext(C_0\cup C_1)$, and $\frac{7-k}{2}$ lines intersecting $C_0$ in
$7-k$ points in 
$\int(C_0\cup C_1)$. Denote by $L$ the union of these lines. 
We obtain a
non-singular maximally
inflected quartic $C_2$ by perturbing   $C_0\cup C_1$  with $L$. By 
 perturbing
$C_0\cup C_2$ to a non-singular maximally
inflected curve of degree 6, we obtain the curve $C(k)$.
\begin{figure}[htbp] 
\begin{tabular}{ccc}
   \centering
   \includegraphics[width=6cm]{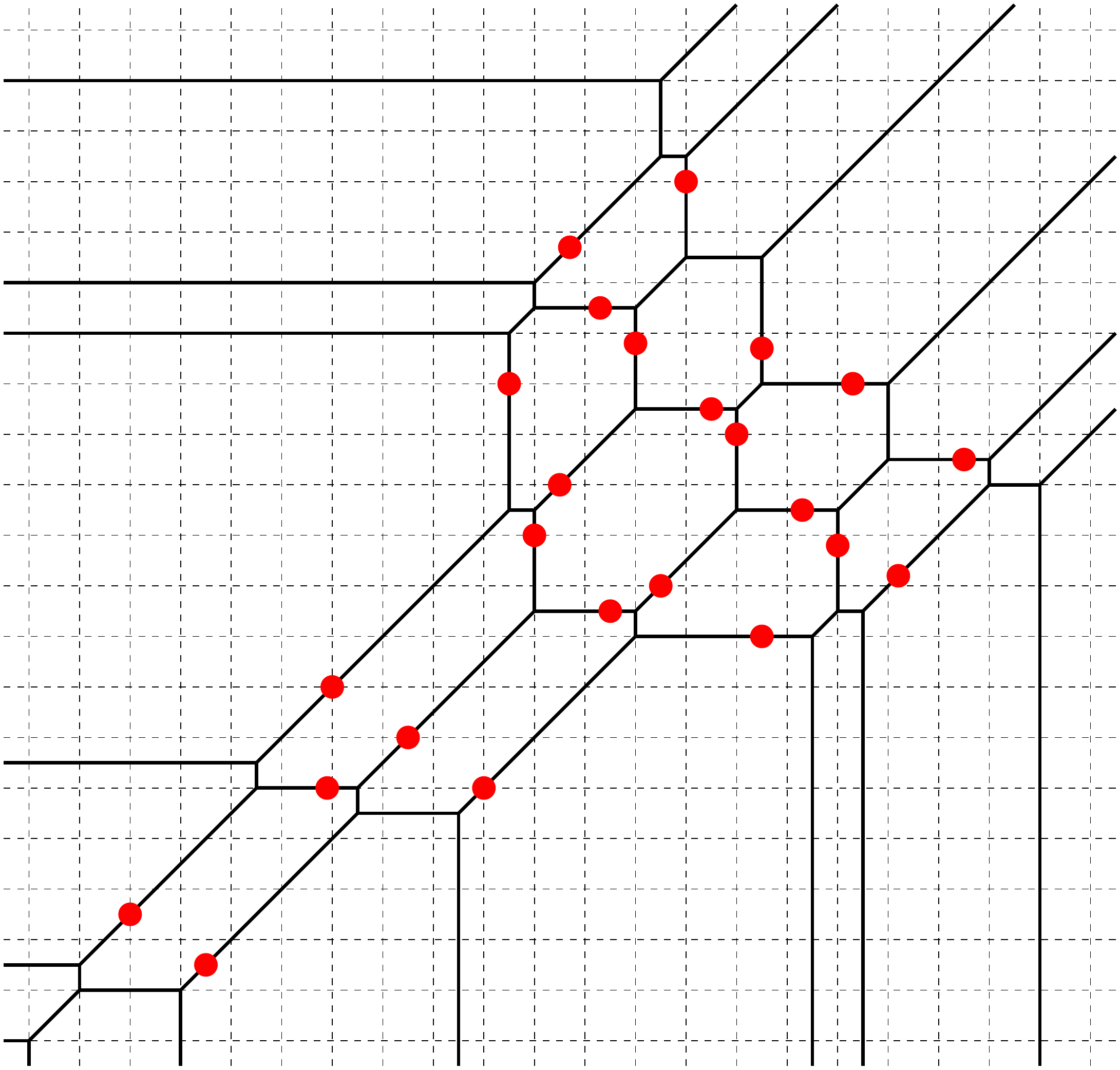} & \hspace{1cm} &
   \includegraphics[width=6cm]{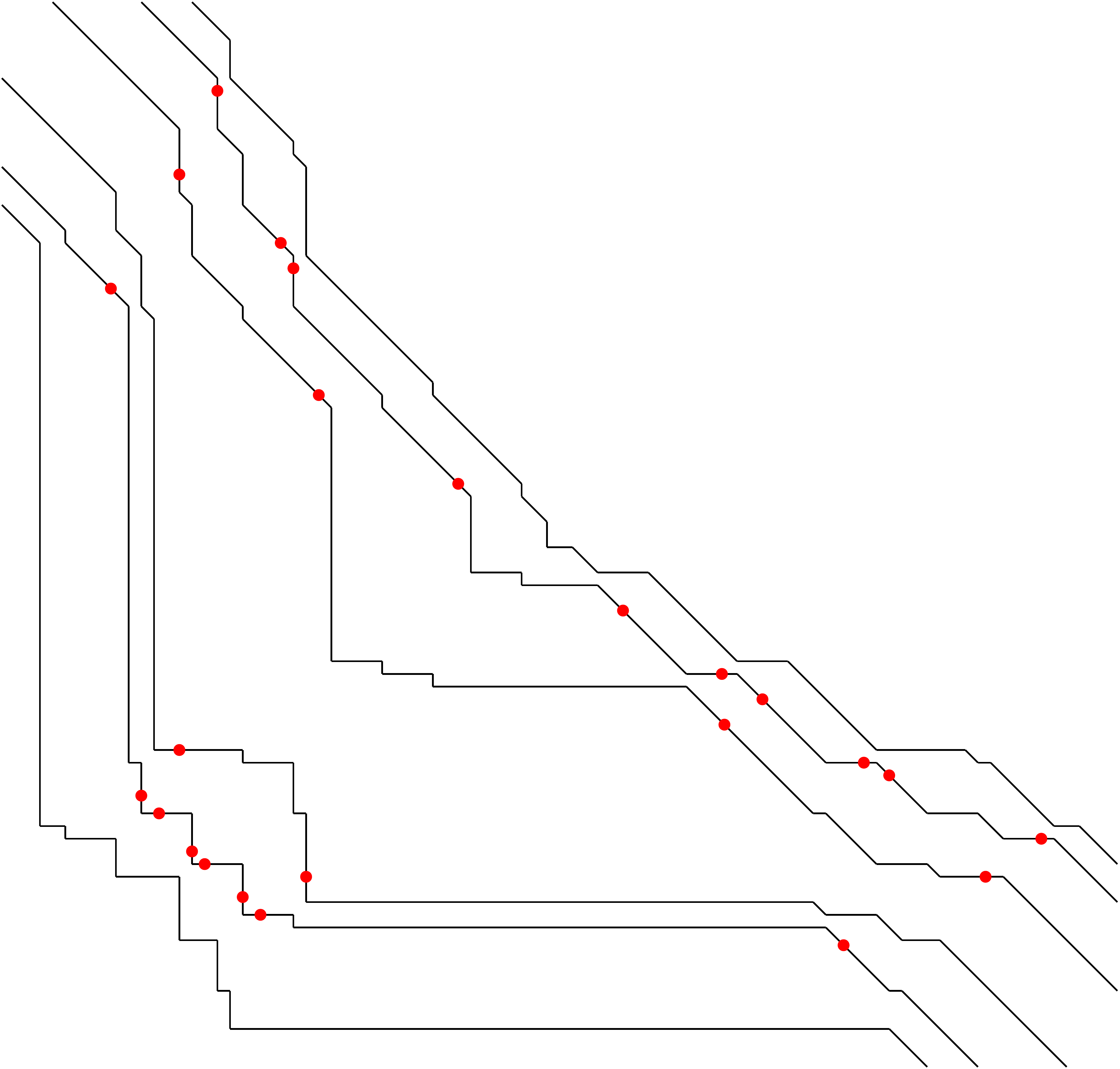}
\\ \\ a) $\mathbb T C$ && b)
\end{tabular}
   \caption{Patchworking of the curve $C(0)$}
   \label{fig:patch}
\end{figure}
\begin{figure}[htbp] 
\begin{tabular}{ccc}
   \centering
   \includegraphics[width=9cm]{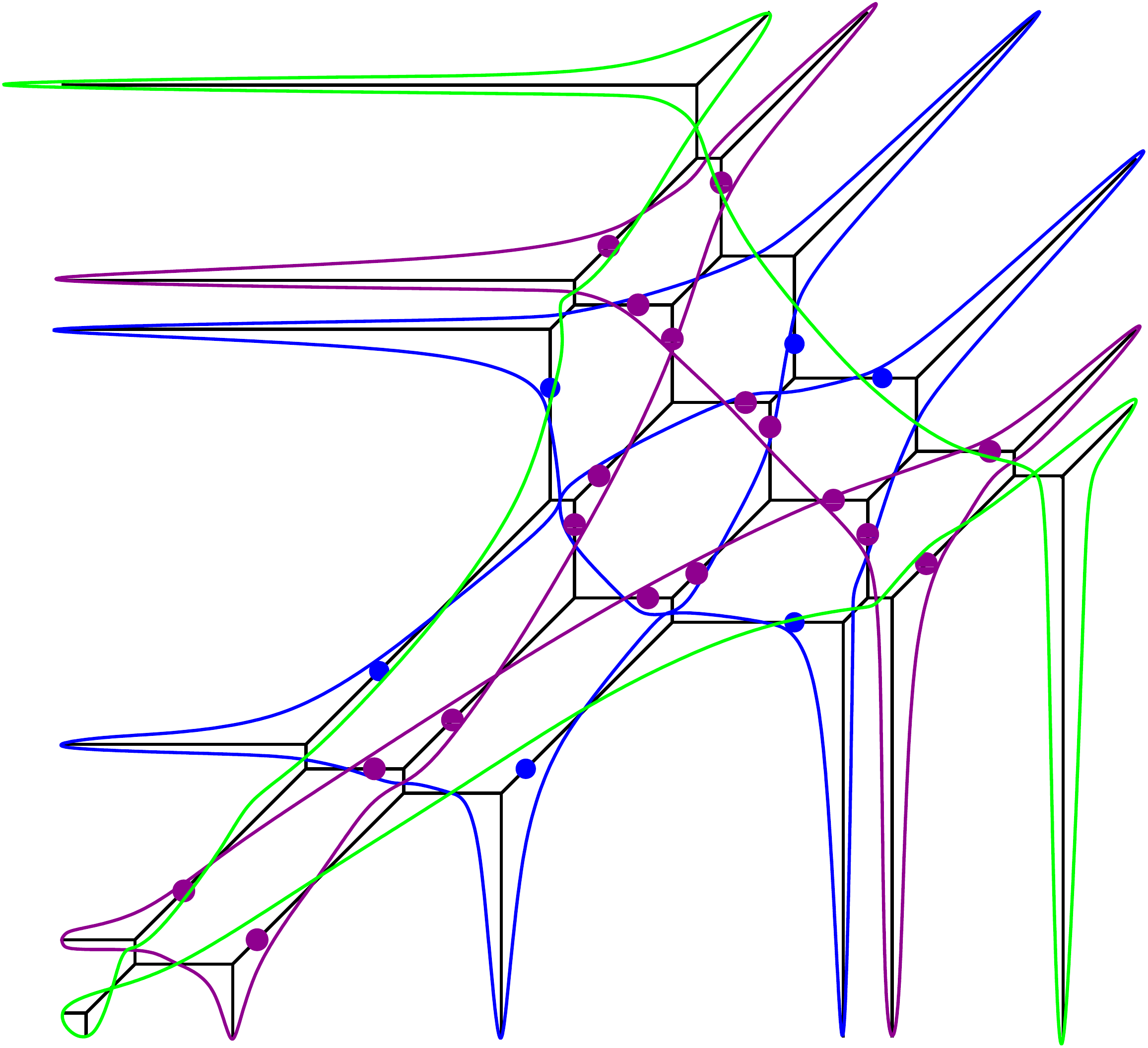}& \hspace{1cm} &
   \includegraphics[width=3cm]{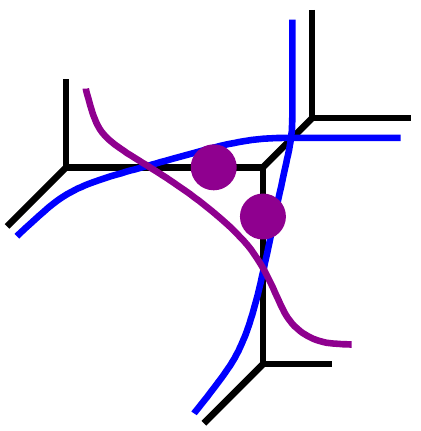}
\\ \\ a)  && b)

\end{tabular}
   \caption{Patchworking of the curve $C(0)$, intermediate step
 (each tropical inflection point  is coloured
according to the arc it comes from)
}
   \label{fig:patch3}
\end{figure}

We construct the curves $C(k)$
with $k$ even by  patchworking. We refer to \cite{BLdM12} for the
definition of tropical
inflection points of tropical plane curves, and  for
the construction of  real algebraic curves with a controlled position of their
real inflection points by patchworking. The tropical curve $\mathbb
TC$ depicted in Figure
\ref{fig:patch}a together with the patchworking depicted in
Figure \ref{fig:patch}b produce the curve $C(0)$. The 
 red 
dots
represent inflection points in 
Figure \ref{fig:patch} (note that all tropical inflection points have
complex multiplicity 3, and real multiplicity 1 in this case). 
Since their position on the tropical curve 
heavily 
depends on the length of its edges, we depicted 
 $\ZZ^2\subset\RR^2$ by the
intersection points of the
 doted lines in Figure \ref{fig:patch}a. 

Let us recall briefly how to locate  real
inflection points in Figure \ref{fig:patch}b. To this aim  we use the ribbon interpretation
of Patchworking, for which one can refer for example to
{\cite[Section 3]{BruBonn}}. The Patchworking depicted in Figure
\ref{fig:patch}b is obtained by twisting all bounded edges of $\mathbb TC$.
We depict in Figure \ref{fig:patch3}a the intermediate picture between 
Figure \ref{fig:patch}a and Figure \ref{fig:patch}b. The  ovals $O_1$, $O_2$,
and $O_3$ of $C(0)$
correspond respectively to the blue, purple, and green curves.
Note that Figure \ref{fig:patch3}a represents the logarithmic image of 
 $C(0)$, in particular the position of $C(0)$ with respect to its
tangent lines cannot be read directly on Figure \ref{fig:patch3}a.
A local computation gives this position for an arc whose logarithmic image is
close to a vertex of $\mathbb TC$ which is not locally a tropical
line. One gets in this way the location of  real inflection points
corresponding to  tropical inflection points located at  vertices of
$\mathbb TC$ (there are no such tropical inflection points in
this particular example, however we are describing the general
procedure to recover the location of real inflection points in a patchworking).
This partial knowledge of the position of $C(0)$ with respect to its
tangent lines determines in its turn the location of real inflections points
corresponding to  tropical inflection points located on edges of
$\mathbb TC$.
For example, any arc whose logarithmic image is
 depicted in Figure  \ref{fig:patch3}b must
contain an even number of real inflection points. Hence these two latter 
must be located on the only arc whose logarithmic image passes close
to the two tropical inflection points.

By varying the length of the edges of $\mathbb TC$, we construct similarly 
 the curves $C(2)$, $C(4)$, and $C(6)$.
Note that any bounded edge of $\mathbb TC$ adjacent to an unbounded
component of $\RR^2\setminus \mathbb TC$ corresponds to a green arc,
which is the logarithmic image of a (convex) arc of $O_3$. In
particular for any choice of the length of edges of $\mathbb
TC$, this construction produces a real hyperbolic curve with at least
six real inflection points  on both $O_1$ and $O_2$.

\begin{figure}[htbp] 
   \centering
\begin{tabular}{c}
   \includegraphics[height=7cm]{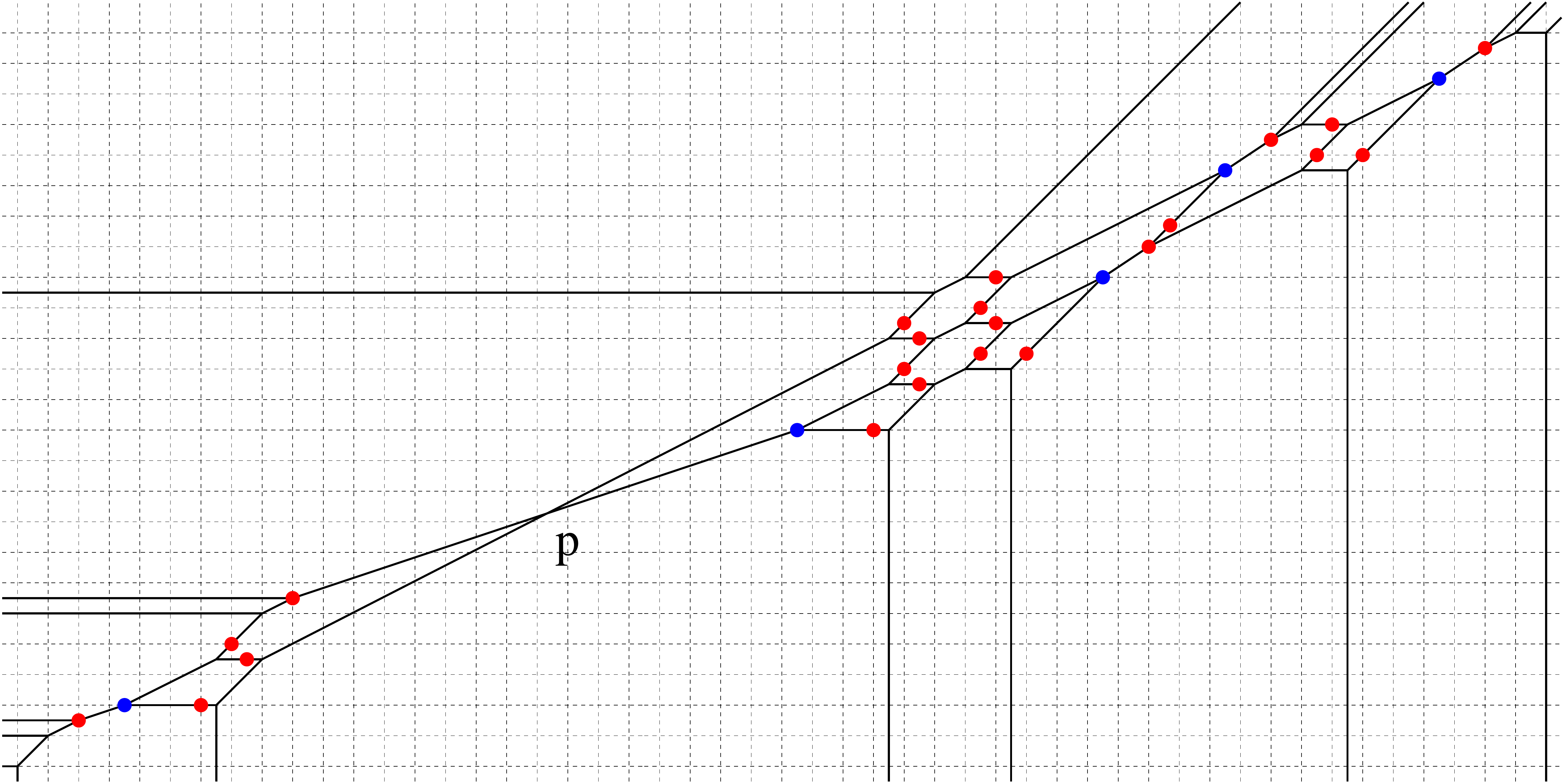}
\\
\\ a)
\\
   \includegraphics[height=7cm]{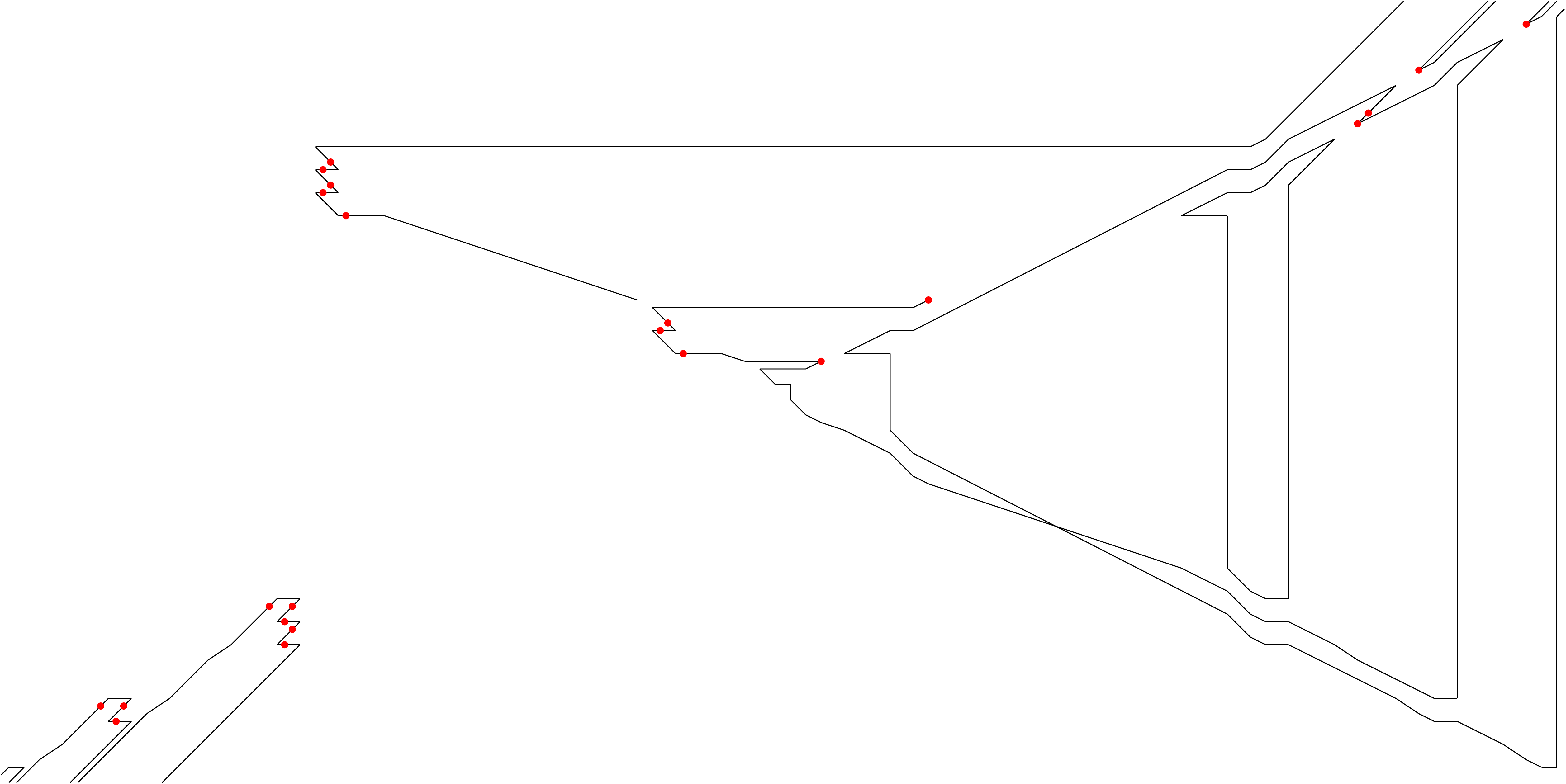}
\\ \\  b)
\end{tabular}
   \caption{Patchworking of the curve $\widetilde C(8)$}
   \label{fig:patch2}
\end{figure}

The construction of the curve $C(8)$ requires a mild adaptation of the
method presented in \cite{BLdM12}. Consider the tropical curve
$\mathbb TC(8)$
 depicted in
Figure \ref{fig:patch2}a. Note that $\mathbb TC(8)$ has a node: all
its vertices are 3-valent, except one 4-valent vertex $p$ which is the
transversal intersection of two edges of $\mathbb TC(8)$. The method used in
\cite{BLdM12} to locate
tropical inflexion points and to compute their real and complex
multiplicities only uses   local data about tropical curves. Since the two
edges of $\mathbb TC(8)$
 intersecting at $p$ do not have any common direction with
an edge of a tropical line, this implies that all computations
performed  in \cite{BLdM12} to locate and compute the multiplicities
of tropical inflexion points of $\mathbb TC(8)$
 distinct from $p$ still holds in
this case. In particular,  red dots in Figure \ref{fig:patch2}a
represent tropical inflexion points distinct from $p$ 
with real multiplicity $1$, and
blue dots   those of real
multiplicity $0$. Using  Shustin's version \cite{Sh1} of Viro's Patchworking
Theorem to construct singular curves,  we can approximate the  real
tropical curve depicted in Figure  \ref{fig:patch2}b by a family
$(\widetilde C_t)_{t\gg1}$ of real
hyperbolic sextic with one generic node. Up to a  translation in $\RR^2$, the
local equation of $\widetilde C_t$ at the node is
$$(x-\alpha y^2)(x+\beta y^3) + o_{t\to +\infty}(t^{-1})$$
with $\alpha$ and $\beta$ two positive real numbers. 
Proposition \ref{prop:klein} implies that 
 smoothing this node 
such that the resulting curve is hyperbolic
produces
 the curve $C(8)$.
\end{proof}

\begin{rem}
If one smoothes ``tropically'' 
the node of the tropical curve $\mathbb TC(8)$ (i.e. perturbing $p$
into two 3-valent vertices before
patchworking),  
 the two additional real
inflection points appear on the oval $O_2$ in the positive
quadrant of $\RR^2$ instead of appearing on the oval $O_1$ in the
quadrant $\{x>0,y<0\}$. This is a
manifestation of the totally discontinuous topology induced on a field
by a non-archimedean valuation. 

On the other hand, one 
can use original Viro's Patchworking to
construct directly the non-singular curve $C(8)$. In this case,
one should use the chart (in the sense of \cite{V1}) at the node $p$ of
$\mathbb TC(8)$ given by a suitable 
perturbation of the curve $(x- y^2)(x+ y^3)$.
\end{rem}

\small

\def\rightmark{\em Bibliography}

\bibliographystyle{alpha}

\newcommand{\etalchar}[1]{$^{#1}$}

\end{document}